\documentclass{amsart} 
\usepackage{graphicx,amssymb,enumerate}
\usepackage[section]{algorithm}
\usepackage[hidelinks]{hyperref}


\newtheorem{theorem}{Theorem}[section] \newtheorem{lemma}{Lemma}[section]

\theoremstyle{definition} \newtheorem{remark}{Remark}[section]
\newtheorem{definition}{Definition}[section]

\title[Randomized mixed H\"older function approximation] {Randomized mixed
H\"older function approximation \\ in higher-dimensions}

\author[N.~F.~Marshall]{Nicholas F. Marshall} \address{Department of
Mathematics, Yale University, New Haven, CT 06511, USA} 
\email{nicholas.marshall@yale.edu}

\thanks{N.F.M. is supported by NSF DMS-1903015.}
\keywords{H\"older condition, sparse grids, randomized Kaczmarz}
\subjclass[2010]{26B35  (primary) and 42B35, 60G42 (secondary)}

\begin{document}

\begin{abstract} 
The purpose of this paper is to extend the result of
\href{https://arxiv.org/abs/1810.00823}{\texttt{arXiv:1810.00823}} to mixed
H\"older functions on $[0,1]^d$ for all $d \ge 1$. In particular, we prove that
by sampling an $\alpha$-mixed H\"older function $f : [0,1]^d \rightarrow
\mathbb{R}$ at  $\sim \frac{1}{\varepsilon} \left(\log \frac{1}{\varepsilon}
\right)^d$ independent uniformly random points from $[0,1]^d$, we can construct
an approximation $\tilde{f}$ such that 
$$ 
\|f - \tilde{f}\|_{L^2} \lesssim \varepsilon^\alpha \left(\log
\textstyle{\frac{1}{\varepsilon}} \right)^{d-1/2},
$$
with high probability.  \end{abstract} \maketitle

\section{Introduction} \subsection{Introduction} 
A function $f : [0,1]^2 \rightarrow \mathbb{R}$ is $(c,\alpha)$-mixed
H\"older if
$$
|f(x',y) - f(x,y)| \le c | x' - x|^\alpha, \quad
|f(x,y') - f(x,y)| \le c | y' - y|^\alpha,
$$
\quad \text{and} \quad
$$
|f(x',y') - f(x',y) - f(x,y') + f(x,y)| \le c (|x'-x| |y'-y|)^\alpha,
$$
for all $x,x',y,y' \in [0,1]$. This definition extends to higher
dimensions as follows. Let $R = I_1 \times \cdots \times I_d \subseteq [0,1]^d$
be the $r$-dimensional box formed by taking the Cartesian product of $r$
intervals $I_{i_1} = [a_{i_1},a_{i_1}+h_{i_1}), \ldots, I_{i_r} =
[a_{i_r},a_{i_r}+h_{i_r})$ and $d-r$ singleton sets
$I_{i_{r+1}} = \{a_{i_{r+1}}\},\ldots,I_{i_d} = \{a_{i_d}\}$, where
$i_1,\ldots,i_d$ is a permutation of $1,\ldots,d$. We define the discrete mixed
difference $\delta_R f$ by
\begin{equation} \label{deltaR}
\delta_R f = ( \delta_{h_{i_1}} \cdots \delta_{h_{i_r}} f)(a_1,\ldots,a_d),
\end{equation}
where $\delta_{h_j}$ is the $j$-th variable discrete difference operator defined
by 
$$
(\delta_{h_j} f)(x_1,\ldots,x_d) = f(x_1,\ldots,x_{j-1},x_j +
h_j,x_{j+1},\ldots,x_d) - f(x_1,\ldots,x_d).
$$

\begin{definition}  \label{defmixed}
We say a function $f : [0,1]^d \rightarrow \mathbb{R}$ is $(c,\alpha)$-mixed
H\"older if 
\begin{equation} \label{mixed}
|\delta_R f| \le c |R|^\alpha, 
\end{equation}
for all $r \le d$ dimensional
boxes $R$, where $|R|$ is the $r$-dimensional measure of $R$.
\end{definition}

For example, if $f : [0,1]^d \rightarrow \mathbb{R}$ satisfies the mixed
derivative condition $\left|\partial^\beta f\right| \le c$ on $[0,1]^d$ for all
multi-indices $\beta \in \{0,1\}^d$, where $\partial^\beta =
\partial^{\beta_1}_{x_1} \cdots \partial^{\beta_d}_{x_d}$, then it follows from
the mean value theorem that $f$ is $(c,1)$-mixed H\"older. Thus, the H\"older
continuity condition is to the condition that $|\partial_{x_j} f|$ is bounded
for all indices $j \in \{1,\ldots,d\}$  as the mixed H\"older condition is to
the condition that $|\partial^\beta f|$ is bounded for all multi-indices $\beta
\in \{0,1\}^d$.

\subsection{Motivation} \label{motivation}
We say that a function $f : [0,1]^d \rightarrow \mathbb{R}$ is
$(c,\alpha)$-H\"older continuous if
$$
|f(x) - f(y)| \le c \|x - y\|^\alpha,
$$
for all $x,y \in [0,1]^d$, where $\|\cdot\|$ denotes the Euclidean norm. If a
function $f : [0,1]^d \rightarrow \mathbb{R}$ is only known to be
$(c,\alpha)$-H\"older continuous, then $\sim (\frac{1}{\varepsilon})^{d}$ samples of the
function are needed to construct an approximation $\tilde{f}$ such that 
$$
\|f - \tilde{f}\|_{L^\infty} \lesssim \varepsilon^\alpha,
$$
where the implicit constant only depends on the constant $c > 0$. Moreover,
sampling $\sim (\frac{1}{\varepsilon})^d$ function values is necessary to
achieve this approximation error in $L^p$ for any fixed $p \ge 1$. The fact that
the number of required samples $\sim (\frac{1}{\varepsilon})^{d}$ grows
exponentially with the dimension $d$ for any fixed $\varepsilon > 0$ is an
example of the curse of dimensionality. Even in moderate dimensions, such
sampling requirements may be intractable in practical situations. 

The mixed H\"older condition strengthens the H\"older condition by
requiring that the mixed difference of $f$ with respect to each box is
controlled by the measure of the box; more precisely, it requires that
\begin{equation} \label{geo}
|\delta_R f| \lesssim |R|^\alpha,
\end{equation}
for all $r \le d$ dimensional boxes $R$. We will see below that this stronger
geometric condition makes it possible to beat the curse of dimensionality.

We remark that while this paper focuses on real-valued mixed H\"older functions
defined on $[0,1]^d$, the geometric condition \eqref{geo} is well-defined for
Banach space valued functions defined on a product of metric spaces;  extensions
of the results of this paper to more abstract settings are discussed in \S
\ref{discussion}.

We can construct an example of a mixed H\"older function on
$[0,1]^d$ by taking the product of $d$ H\"older continuous functions on
$[0,1]$.  More precisely, if $g_1,\ldots,g_d : [0,1] \rightarrow \mathbb{R}$ are
each $\alpha$-H\"older continuous, and we define $f : [0,1]^d \rightarrow
\mathbb{R}$ by
\begin{equation} \label{product}
f(x_1,\ldots,x_d) = g_1(x_1) \cdots g_d(x_d),
\end{equation}
for $(x_1,\ldots,x_d) \in [0,1]^d$, then it follows that $f$ is $\alpha$-mixed
H\"older. Moreover, if $f : [0,1]^d \rightarrow \mathbb{R}$ is a linear
combination of products of the form \eqref{product}, then it follows that $f$ is
$\alpha$-mixed H\"older. In general, \eqref{geo} can be viewed as enforcing a
local version of these product structures on a function. The approximation
theory of functions with this type of local product regularity was first
developed by Smolyak in 1963, see \cite{Smolyak1963}. In particular, Smolyak
developed an approximation method that involves using function values at the
center of dyadic boxes in $[0,1]^d$, where a dyadic box in $[0,1]^d$ is a
Cartesian product of $d$ dyadic intervals. 

\begin{lemma}[Smolyak] \label{lem1} 
Suppose that $f : [0,1]^d \rightarrow \mathbb{R}$ is a $(c,\alpha)$-mixed
H\"older function that is sampled at the center of all dyadic boxes of measure
at least $\varepsilon$ contained in $[0,1]^d$, which is a set of $\sim
\frac{1}{\varepsilon} (\log \frac{1}{\varepsilon})^{d-1}$ points. Then, using
the function values of $f$ at these
points we can compute an approximation $\tilde{f}$ such that 
$$
\| f - \tilde{f} \|_{L^\infty} \lesssim \varepsilon^{\alpha} (\log
{\textstyle \frac{1}{\varepsilon}} )^{d-1},
$$ 
where the implicit constant
only depends on the constant $c >0$, and the dimension $d \ge 1$.
\end{lemma}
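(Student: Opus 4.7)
The plan is to use Smolyak's sparse-grid construction. For a univariate function $g : [0,1] \to \mathbb{R}$, let $U_\ell g$ denote the piecewise constant interpolant equal to $g$ at the center of each dyadic interval of length $2^{-\ell}$, set $U_{-1} = 0$, and define the one-dimensional detail operator $\Delta_\ell := U_\ell - U_{\ell-1}$ for $\ell \ge 0$. The univariate case $r=1$ of \eqref{mixed} immediately gives $\|\Delta_\ell g\|_\infty \lesssim 2^{-\alpha \ell}$.

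For the multivariate case I would take $L = \lceil \log_2(1/\varepsilon) \rceil$ and define the Smolyak approximation
$$
\tilde f := \sum_{\ell_1 + \cdots + \ell_d \le L} (\Delta_{\ell_1} \otimes \cdots \otimes \Delta_{\ell_d}) f,
$$
where each tensor factor acts in the corresponding coordinate. By construction $\tilde f$ only uses samples of $f$ at centers of dyadic boxes of measure $\ge 2^{-L} \gtrsim \varepsilon$, and a standard lattice count gives $\sum_{\ell_1 + \cdots + \ell_d \le L} 2^{\ell_1 + \cdots + \ell_d} \sim 2^L L^{d-1} \sim \frac{1}{\varepsilon}(\log \frac{1}{\varepsilon})^{d-1}$ samples in total, matching the claimed rate.

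The heart of the argument is to observe that for each multi-index $(\ell_1,\ldots,\ell_d)$ and each $x \in [0,1]^d$, the value $(\Delta_{\ell_1} \otimes \cdots \otimes \Delta_{\ell_d}) f(x)$ is, up to sign, a mixed difference $\delta_R f$ over a dyadic box $R$ whose $j$-th side length is $\lesssim 2^{-\ell_j}$ when $\ell_j \ge 1$ and a singleton when $\ell_j = 0$. Definition \ref{defmixed} then yields $\|(\Delta_{\ell_1} \otimes \cdots \otimes \Delta_{\ell_d}) f\|_\infty \lesssim 2^{-\alpha(\ell_1 + \cdots + \ell_d)}$. Telescoping in each coordinate gives $\Id = \sum_{\ell_1,\ldots,\ell_d \ge 0} \Delta_{\ell_1} \otimes \cdots \otimes \Delta_{\ell_d}$, so the error is
$$
f - \tilde f = \sum_{\ell_1 + \cdots + \ell_d > L} (\Delta_{\ell_1} \otimes \cdots \otimes \Delta_{\ell_d}) f,
$$
which I would bound by grouping multi-indices into shells $\ell_1 + \cdots + \ell_d = k$: the shell has $\binom{k+d-1}{d-1} \lesssim k^{d-1}$ terms, so
$$
\|f - \tilde f\|_\infty \lesssim \sum_{k > L} k^{d-1} 2^{-\alpha k} \lesssim L^{d-1} 2^{-\alpha L} \lesssim \varepsilon^\alpha \bigl(\log \tfrac{1}{\varepsilon}\bigr)^{d-1}.
$$

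The main obstacle is the algebraic identification of $(\Delta_{\ell_1} \otimes \cdots \otimes \Delta_{\ell_d}) f(x)$ with a single mixed difference $\delta_R f$ so that Definition \ref{defmixed} can be applied uniformly, including the degenerate cases $\ell_j = 0$ in which the corresponding coordinate contributes only a midpoint evaluation and drops a dimension from $R$. Once this identification is carried out, the remaining work is the geometric-series tail estimate and the routine sparse-grid sample count above.
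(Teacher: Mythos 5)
Your proof is correct, and it reaches the same Smolyak approximation as the paper (your hierarchical sum $\sum_{\ell_1+\cdots+\ell_d\le L}\Delta_{\ell_1}\otimes\cdots\otimes\Delta_{\ell_d}$ is, by the standard combination-technique identity, exactly the paper's $\sum_k(-1)^k\binom{d-1}{k}\sum_{i_1+\cdots+i_d=m-k}f_{\vec{i}}$), but the error analysis takes a genuinely different route. The paper's Lemma~\ref{lem2} proceeds by induction on the dimension $d$: it telescopes in the last coordinate via $f_{\vec{m},m}=\sum_{j}(f_{\vec{m},j}-f_{\vec{m},j-1})+f_{\vec{m},0}$, applies the inductive hypothesis to each rescaled increment $2^{\alpha j}(f_{\cdot,j}-f_{\cdot,j-1})$, and then carries out a combinatorial rearrangement to recover the combination formula. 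You instead bound each hierarchical surplus $(\Delta_{\ell_1}\otimes\cdots\otimes\Delta_{\ell_d})f(x)$ directly, identifying it (up to sign) as $\delta_R f$ for a box $R$ whose $j$-th side has length $2^{-\ell_j-1}$ when $\ell_j\ge 1$ and is a singleton when $\ell_j=0$, so Definition~\ref{defmixed} gives $\lesssim 2^{-\alpha(\ell_1+\cdots+\ell_d)}$ at once, and the shell sum over $\ell_1+\cdots+\ell_d=k>L$ yields $\lesssim L^{d-1}2^{-\alpha L}$. Your route is more transparent about where the mixed H\"older condition enters — once per surplus term — and it delivers the combination formula and the error bound simultaneously, whereas the paper's induction isolates the rearrangement algebra but obscures the geometry. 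One small slip worth fixing: the claim that the $r=1$ case of \eqref{mixed} gives $\|\Delta_\ell g\|_\infty\lesssim 2^{-\alpha\ell}$ fails at $\ell=0$, since $\Delta_0 g=g(1/2)$ is a point evaluation rather than a difference; this does not affect the error estimate, because every multi-index in the tail $\ell_1+\cdots+\ell_d>L$ has some $\ell_j\ge 1$, so the associated $\delta_R$ is genuinely a mixed difference of positive dimension.
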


We state and prove a constructive version of this result later in the paper,
see Lemma \ref{lem2}. The method of Smolyak has inspired a family of so called
sparse grid computational methods, which are surveyed by  Bungartz and Griebel
\cite{BungartzGriebel2004}. A sparse grid is a set formed from the center points
of all dyadic boxes whose measure exceeds a fixed threshold: while the standard
grid $(\varepsilon \mathbb{Z}^d) \cap [0,1]^d$ has $\sim
\left(\frac{1}{\varepsilon} \right)^d$ points, a sparse grid for $[0,1]^d$ with
threshold $\varepsilon$ contains just $\sim \frac{1}{\varepsilon} \left(\log
\frac{1}{\varepsilon} \right)^{d-1}$ points. The theory of sparse grids has been
developed by several authors including \cite{ GerstnerGriebel1998,
GriebelHamaekers2014, KnapekKoster2002}. The method of Smolyak also
inspired work by Str\"omberg who observed that Smolyak's result can be
reformulated as an approximation result based on tensor wavelets
\cite{Stromberg1998}. Our approach to approximating mixed H\"older functions is
inspired by Str\"omberg's work, and the connection of Smolyak's method to tensor
wavelet systems. 

This paper builds upon previous work \cite{Marshall2018b} by the
author, which established a randomized approximation result for mixed H\"older
functions in two-dimensions. In particular, the main result of
\cite{Marshall2018b} is that an $\alpha$-mixed H\"older function on $[0,1]^2$
can be approximated to error $\lesssim \varepsilon^\alpha (\log
\frac{1}{\varepsilon})^{3/2}$ in $L^2$ with high probability using $\sim
\frac{1}{\varepsilon} (\log \frac{1}{\varepsilon})^2$ uniformly random
samples of the function.

\subsection{Main result} \label{mainresult} 
The purpose of this paper is to extend the result of \cite{Marshall2018b} to
higher dimensions. To clarify notation, we write $f \lesssim g$ to denote that
$f \le C g$ for some implicit constant $C > 0$, and write $f \sim g$ to denote
that $f \lesssim g$ and $g \lesssim f$.  The following theorem  holds for all
dimensions $d \ge 1$.

\begin{theorem} \label{thm1} Let $f :[0,1]^d \rightarrow \mathbb{R}$ be
a $(c,\alpha)$-mixed H\"older function, and $X_1,\ldots,X_n$ be 
$n$ independent uniformly random points from $[0,1]^d$. If $n \ge
c_1 \frac{1}{\varepsilon} (\log \frac{1}{\varepsilon})^d$, then using the
locations of these points and the function values $f(X_1),\ldots,f(X_n)$ we can
construct an approximation $\tilde{f}$ such that
$$
\|f - \tilde{f}\|_{L^2}
\lesssim \varepsilon^{\alpha} (\log {\textstyle \frac{1}{\varepsilon}} )^{d-1/2},
$$
with probability at least $1 - \varepsilon^{c_1 -2 \alpha}$, where the implicit
constant only depends  on the constants $c_1 > 0$ and $c > 0$, and the dimension
$d \ge 1$.
\end{theorem}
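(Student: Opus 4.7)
The plan is to lift the deterministic Smolyak approximation of Lemma \ref{lem1} to the random-sampling regime by expanding $f$ in a tensor Haar wavelet orthonormal basis of $L^2([0,1]^d)$, truncating to a Smolyak-type sparse set of wavelets, and estimating each retained coefficient from the random samples.

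\emph{Basis and truncation.} For dyadic boxes $R = R_1 \times \cdots \times R_d \subseteq [0,1]^d$ define the tensor Haar wavelets $\psi_R = \bigotimes_j \psi_{R_j}$, normalized so that $\|\psi_R\|_{L^2} = 1$ and $\|\psi_R\|_{L^\infty} \sim |R|^{-1/2}$. Iterating the one-dimensional identity that relates a Haar inner product to a dyadic difference expresses $c_R := \langle f, \psi_R\rangle$ as $\sqrt{|R|}$ times a linear combination of mixed differences $\delta_{R'} f$ over half-sized sub-boxes $R'$ of $R$, so \eqref{mixed} gives the coefficient decay $|c_R| \lesssim |R|^{1/2 + \alpha}$. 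Set the sparse index set $\Lambda = \{R : |R| \ge \varepsilon\}$; counting dyadic boxes of fixed measure by compositions of an integer yields $|\Lambda| \sim (1/\varepsilon)(\log(1/\varepsilon))^{d-1}$, and by Parseval the orthogonal truncation error satisfies
$$
\sum_{R \notin \Lambda} |c_R|^2 \lesssim \varepsilon^{2\alpha}(\log(1/\varepsilon))^{d-1},
$$
which is already within the target budget of $\varepsilon^{2\alpha}(\log(1/\varepsilon))^{2d-1}$.

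\emph{Coefficient estimation.} Build the approximation $\tilde f = \sum_{R \in \Lambda}\hat c_R \psi_R$ from the unbiased Monte Carlo estimators $\hat c_R = \frac{1}{n}\sum_{i=1}^{n} f(X_i)\psi_R(X_i)$. Parseval then gives
$$
\|f - \tilde f\|_{L^2}^2 = \sum_{R \notin \Lambda}|c_R|^2 + \sum_{R \in \Lambda}|\hat c_R - c_R|^2,
$$
so it remains to bound the stochastic sum by $\varepsilon^{2\alpha}(\log(1/\varepsilon))^{2d-1}$ with failure probability $\lesssim \varepsilon^{c_1 - 2\alpha}$. Since $\psi_R^2 = \mathbb{1}_R/|R|$, each summand of $\hat c_R$ has variance at most $\|f\|_\infty^2$ and modulus at most $\|f\|_\infty |R|^{-1/2}$; Bernstein's inequality applied per wavelet with a scale-adapted threshold $t_R$, followed by a union bound over $R \in \Lambda$, should yield the required concentration as soon as $n \ge c_1 (1/\varepsilon)(\log(1/\varepsilon))^d$.

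\emph{Main obstacle.} The delicate point is calibrating the Bernstein analysis uniformly across scales: the finest wavelets in $\Lambda$ account for most of $|\Lambda|$ and have the largest Bernstein range parameter $|R|^{-1/2} \sim \varepsilon^{-1/2}$, so the threshold $t_R$ must depend on $|R|$ in order to equalize per-wavelet failure probabilities while keeping $\sum_{R \in \Lambda}t_R^2$ at the target. If the direct Monte Carlo estimator is too noisy to meet the bound at the finest scales---a phenomenon already navigated in the two-dimensional analysis of \cite{Marshall2018b}---the natural remedy is to replace it by a least-squares solution of the design system $(\psi_R(X_i))_{i,R}\,c = f(X_i)$, or by a few iterations of randomized Kaczmarz on the same system, absorbing the Monte Carlo noise into a smaller residual that inherits the same concentration behaviour. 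Verifying that the stipulated sample size is simultaneously sufficient for (i) a nontrivial number of samples in every dyadic box at the finest scale, (ii) the Bernstein range regime across all scales, and (iii) a union-bound cost over $|\Lambda|$ wavelets producing the exponent $c_1 - 2\alpha$ in the failure probability, is the main technical calculation of the proof.
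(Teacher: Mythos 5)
Your primary route---estimating each coefficient $c_R$ by the Monte Carlo average $\hat c_R = \tfrac{1}{n}\sum_i f(X_i)\psi_R(X_i)$---cannot reach the claimed bound, and the problem is more severe than a Bernstein-range calibration issue. The per-sample variance of $f(X)\psi_R(X)$ is genuinely of order $\|f\|_{\infty}^2$ at every scale: wavelets supported on eccentric boxes (say $\varepsilon \times 1 \times \cdots \times 1$) see $\mathcal{O}(1)$ variation of $f$ over their support, so the H\"older regularity provides no variance decay. Hence $\mathbb{E}|\hat c_R - c_R|^2$ is of order $1/n$ uniformly in $R$, and summing over the $|\Lambda| \sim \frac{1}{\varepsilon}(\log\frac{1}{\varepsilon})^{d-1}$ retained indices gives
$$
\mathbb{E}\sum_{R\in\Lambda}|\hat c_R - c_R|^2 \sim \frac{|\Lambda|}{n} \sim \frac{1}{\log\frac{1}{\varepsilon}},
$$
which overshoots the target $\varepsilon^{2\alpha}(\log\frac{1}{\varepsilon})^{2d-1}$ by a factor of roughly $\varepsilon^{-2\alpha}$. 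No scale-adapted thresholding or union bound can compensate, because the shortfall is already in the mean-square error, not the tails.

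Your fallback of switching to Kaczmarz or least-squares is the right move---it is what the paper does---but your sketch omits the idea that makes the switch decisive. The paper does not regress $f$ against the design; it sets up a \emph{consistent} system $Aw = b$ in which the rows are a carefully constructed embedding $\Psi$ (Definition~\ref{Psi}), $w$ encodes the deterministic Smolyak approximation $\bar f$ from Lemma~\ref{lem2}, and $b_j = \bar f(x_j)$. Running randomized Kaczmarz on this system with the observed $f(X_n)$ substituted for the unavailable $\bar f(X_n)$ introduces per-step noise $\epsilon_n = f(X_n) - \bar f(X_n)$, which by Lemma~\ref{lem2} is \emph{uniformly} bounded by $\lesssim \varepsilon^{\alpha}(\log\frac{1}{\varepsilon})^{d-1}$, not by $\|f\|_{\infty}$. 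That single fact replaces the $\mathcal{O}(1)$ noise level that sinks Monte Carlo with an $\varepsilon^{\alpha}$-small one. Two further ingredients are essential and absent from your outline: (i) the embedding $\Psi$ is designed so its coordinates are uncorrelated (Lemma~\ref{expect0}), forcing all singular values of the design matrix to coincide and giving the Kaczmarz iteration the optimal contraction $(1-1/p)^n$; and (ii) the sample size $n \sim p\log(2^m)$ is tuned to balance that geometric decay against the pathwise bound $\|e_n\|_2^2 \lesssim n\,\varepsilon^{2\alpha}(\log\frac{1}{\varepsilon})^{d-1}$ on the accumulated noise, with the stated probability $1 - \varepsilon^{c_1 - 2\alpha}$ coming from Markov's inequality applied to the clean-system error alone. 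As written, your proposal identifies the correct destination but not the mechanism that gets there.
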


The construction of $\tilde{f}$ and the proof of Theorem \ref{thm1} are
detailed in \S \ref{proofofmain}. In the following, we make three
remarks related to Theorem \ref{thm1}.

\begin{remark}[Computational cost]\label{compcost}
The cost of constructing $\tilde{f}$  is $\sim \frac{1}{\varepsilon} (\log
\frac{1}{\varepsilon})^{2d-1}$ operations of pre-computation, and then $\sim
(\log \frac{1}{\varepsilon})^{d-1}$ operations for each point evaluation of
$\tilde{f}$, see \S \ref{compcostproof}. Additionally, after pre-computation, in
$\sim \frac{1}{\varepsilon}$ additional operations we can compute an
approximation $\tilde{I}$ of the integral of $f$ that satisfies
$$
\left| \int_{[0,1]^d} f(x) dx - \tilde{I} \right| \lesssim \varepsilon^\alpha
(\log {\textstyle \frac{1}{\varepsilon}} )^{d-1/2},
$$
with high probability; this error bound follows from
Theorem \ref{thm1}, and the claim that $\tilde{I}$ can be computed in $\sim
\frac{1}{\varepsilon}$ operations after the pre-computation is justified in \S
\ref{compcostproof}. 
\end{remark}

\begin{remark}[Spin Cycling] \label{spincycling} 
Using random samples makes it possible to  perform spin cycling without
re-sampling function values.  Spin cycling is a technique for refining wavelet
based approximations that involves averaging over shifts of the underlying
dyadic structure.
Suppose that $f :\mathbb{T}^d \rightarrow \mathbb{R}$ is a $\alpha$-mixed
H\"older function on the torus $\mathbb{T}^d$, $\{\gamma_1,\ldots,\gamma_s\}
\subset \mathbb{T}^d$ is a fixed set of shifts, and $\{X_1,\ldots,X_n\} \subset
\mathbb{T}^d$ is a fixed set of sample points. Let $f_\gamma(x) := f(x-\gamma)$,
and define $f^s$ by
\begin{equation} \label{shiftavg}
f^s(x) = \frac{1}{s} \sum_{i = 1}^s
\tilde{f}_{\gamma_i}(x+\gamma_i),
\end{equation}
where $\tilde{f}_\gamma$ is the approximation of $f_\gamma$ using the
method of Theorem \ref{thm1} with the points and function values
$$
\{X_j + \gamma\}_{j=1}^n \quad \text{and} \quad 
\quad \{f_\gamma(X_j+\gamma)\}_{j=1}^n.
$$ 
Since $f_{\gamma}(X_j + \gamma) = f(X_j)$, constructing this approximation does
not require sampling additional function values. Moreover, since the
construction of $\tilde{f}$ depends on the relation between the sample points
and dyadic decomposition of $[0,1]^d$, see \S \ref{proofofmain}, in general
$\tilde{f}_{\gamma}(x+\gamma)$ will differ from $\tilde{f}(x)$. The act of
aligning and averaging these approximations via \eqref{shiftavg} is called spin
cycling. When $n$ is chosen large enough in terms of $\varepsilon$ and $s$, each
function $\tilde{f}_\gamma(x+\gamma)$ will approximate $f(x)$ at the rate of
Theorem \ref{thm1} with high probability; empirically, averaging these
approximations has been shown to remove method artifacts, see \S 4.1 of
\cite{Marshall2018b}.
\end{remark}

\begin{remark}[Possible extensions]
It may be possible to generalize the approach of this paper to function classes
that have more regularity. The proof of Theorem \ref{thm1} is constructive and
describes a randomized method of approximating a mixed H\"older function using a
linear combination of indicator functions of dyadic boxes, which is equivalent
to using a linear combination of tensor Haar wavelets. Part of our motivation
for studying this relatively low regularity approximation problem is that we are
able to isolate some of the underlying geometric issues.  By using a function
class corresponding to smoother wavelets it may be possible to extend the
approach of this paper to develop randomized methods of approximating functions
with higher levels of regularity that have a similar local product structure to
mixed H\"older functions. In addition to extending the results of this paper to
approximate smoother functions, it may also be possible to strengthen the result
of Theorem \ref{thm1}. In particular, numerical evidence suggests that it may be
possible to remove some of the factors of $\log \frac{1}{\varepsilon}$ in
Theorem \ref{thm1}, and suggests that a similar approximation rate may hold in
$L^\infty$, see \S \ref{numericalexample}.  
\end{remark}

\subsection{Organization}
The remainder of the paper is organized as follows. In \S \ref{preliminaries} we
set notation and give mathematical preliminaries. In \S \ref{embedding} we
state an important definition and establish two key lemmas. In \S
\ref{proofofmain} we prove Theorem \ref{thm1}. In \S \ref{algoandex} we give
implementation details and a numerical example in three dimensions.  Finally, in
\S \ref{discussion} we discuss the results of this paper and possible
generalizations.

\section{Preliminaries} 
\label{preliminaries} 
\subsection{Notation} \label{notation} 
Let $\mathcal{M}^\alpha([0,1]^d)$ denote the space of all real-valued
$\alpha$-mixed H\"older functions on $[0,1]^d$. We say that $c > 0$ is the mixed
H\"older constant of $f \in \mathcal{M}^\alpha([0,1]^d)$ if $f$ is
$(c,\alpha)$-mixed H\"older on $[0,1]^d$.  Let $\mathcal{D}$ be the set of
dyadic intervals in $[0,1]$; more precisely, 
$$ 
\mathcal{D} := \left\{ \big[ (j-1) 2^{-k}, j 2^{-k} \big) \subset \mathbb{R} : k
\in \mathbb{Z}_{\ge 0} \wedge j \in \{1,\ldots,2^k\} \right\}, 
$$ 
and let $\mathcal{D}_d$ be the set of all dyadic boxes contained in $[0,1]^d$,
that is,
$$ 
\mathcal{D}_d := \left\{ I_1 \times \cdots
\times I_d \subset \mathbb{R}^d : I_1,\ldots,I_d \in \mathcal{D} \right\}.  
$$
We claim that the number of dyadic boxes of measure $2^{-m}$ in $[0,1]^d$ is
\begin{equation} \label{count}
\# \{ R \in \mathcal{D}_d : |R|=2^{-m} \} = 2^m { m + d - 1 \choose d - 1}.
\end{equation}
Indeed, suppose $R = I_1 \times \cdots \times I_d \in \mathcal{D}_d$ has
dimensions $2^{-i_1} \times \cdots \times 2^{-i_d}$. Since $R$ is a dyadic box
contained in $[0,1]^d$, it follows that $i_1,\ldots,i_d \in \mathbb{Z}_{\ge 0}$.
Moreover, if $R$ has measure $|R|=2^{-m}$ we must have
$$ 
i_1 + \cdots + i_d = m.
$$
Thus, the number of different shapes of dyadic boxes that have measure $2^{-m}$
and are contained in $[0,1]^d$ is 
\begin{equation} \label{count2}
\# \{(i_1,\ldots,i_d) \in \mathbb{Z}_{\ge 0}^d : i_1 + \cdots +
i_d = m \} = { m + d -1 \choose d-1}.
\end{equation}
Since for each fixed shape there are $2^m$ dyadic boxes of measure $2^{-m}$ 
contained in $[0,1]^d$, the counting formula \eqref{count} follows.  As an
example we illustrate all dyadic boxes of measure $2^{-m}$ in $[0,1]^3$ in
Figure
\ref{fig02}.  
\begin{figure}[h!] \centering
\includegraphics[width=.4\textwidth]{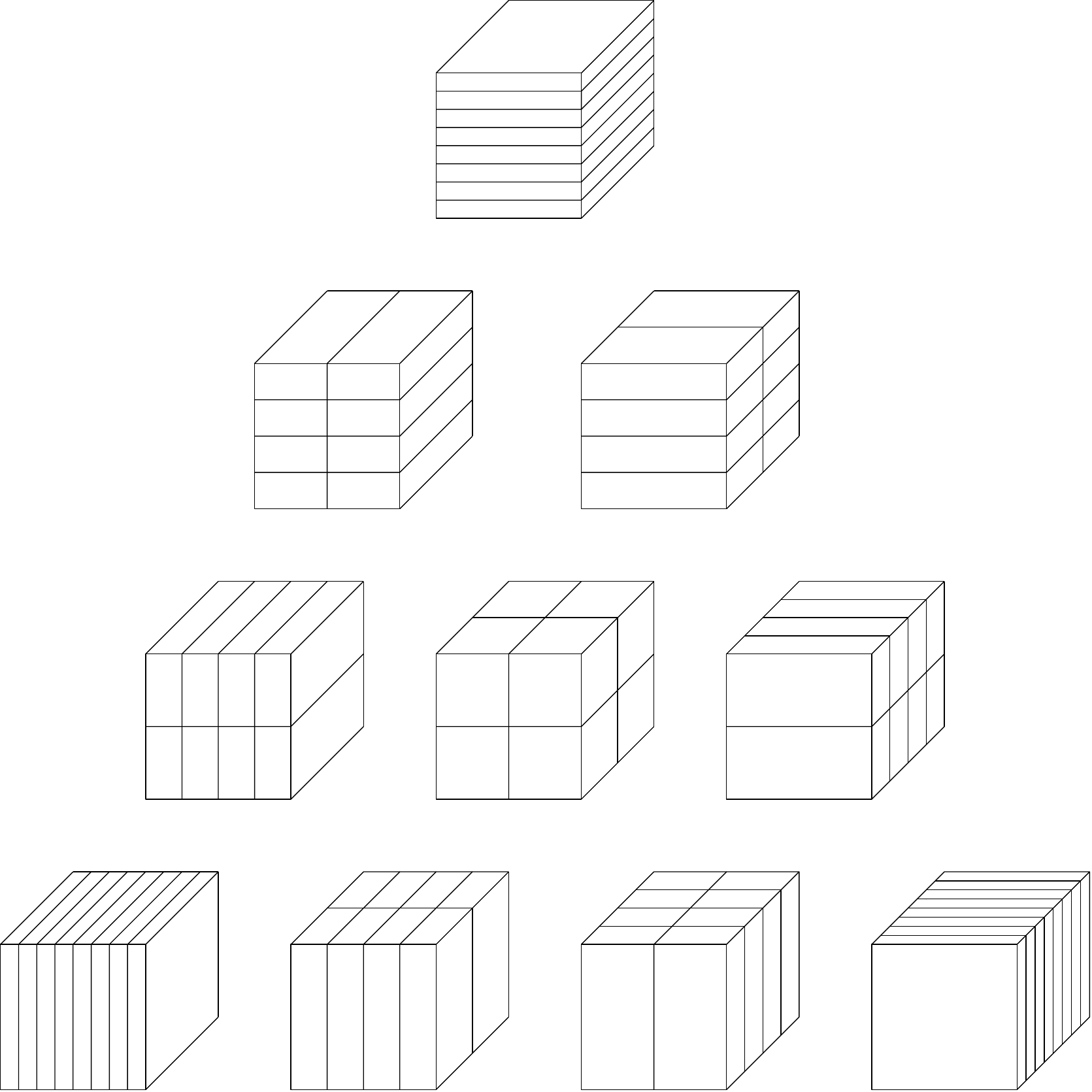} 
\caption{The set of $10 \cdot 2^3$ dyadic boxes of measure $2^{-3}$ in
$[0,1]^3$.} \label{fig02} 
\end{figure}

Recall that Lemma \ref{lem1} involves sampling function values at the
center of dyadic boxes of measure at least $\varepsilon$ in $[0,1]^d$. From the
above considerations, it follows that 
$$ 
\#\{ R \in \mathcal{D}_d : |R| \ge \varepsilon \} \sim {\textstyle
\frac{1}{\varepsilon}} (\log {\textstyle \frac{1}{\varepsilon}})^{d-1}, 
$$ 
that is,
the number of dyadic boxes of measure at least $\varepsilon$ in $[0,1]^d$  is
$\sim \frac{1}{\varepsilon} (\log\frac{1}{\varepsilon})^{d-1}$.

\subsection{Smolyak's Lemma} \label{deterministic} Recall that Lemma \ref{lem1}
says that if we sample values from a function $f \in \mathcal{M}^\alpha([0,1]^d)$ at the
center of all dyadic boxes of measure at least $\varepsilon$, then we can
compute an approximation $\tilde{f}$ such that $\|f - \tilde{f}\|_{
L^\infty} \lesssim \varepsilon^{\alpha} (\log \frac{1}{\varepsilon})^{d-1}$. The
following is a more precise version of Lemma \ref{lem1}.

\begin{lemma}[Smolyak] \label{lem2} Suppose that $f: [0,1]^d \rightarrow
\mathbb{R}$ is a $(c,\alpha)$-mixed H\"older function.  Fix $x \in [0,1]^d$, and
let $f_{\vec{i}}$ be the value of $f$ at the center of the dyadic box that
contains $x$, and has dimensions $2^{-i_1} \times \cdots \times 2^{-i_d}$, where
$\vec{i} = (i_1,\ldots,i_d)$. Then, for $m \ge 1$ we have 
\begin{equation} \label{smolyakeq}
\left| f(x) - \sum_{k=0}^{\min\{d-1,m\}} (-1)^k {d -1 \choose k}
\sum_{i_1+\cdots+i_d = m-k} f_{\vec{i}} \right| \lesssim 2^{-\alpha m} m^{d-1}, 
\end{equation}
where the implicit
constant only depends on the mixed H\"older constant $c > 0$ and the dimension
$d \ge 1$. 
 \end{lemma}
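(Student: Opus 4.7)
My plan is to prove Lemma \ref{lem2} via the hierarchical-surplus/combination-technique formulation of Smolyak's algorithm. For a univariate function $g : [0,1] \to \mathbb{R}$ and an integer $i \ge 0$, let $P_i g(x)$ denote the value of $g$ at the center of the unique dyadic interval of length $2^{-i}$ containing $x$, and define the surplus operators $\Delta_0 := P_0$ and $\Delta_i := P_i - P_{i-1}$ for $i \ge 1$. The two centers appearing in $\Delta_i g(x)$ differ by exactly $2^{-i-1}$, so a univariate H\"older bound gives $|\Delta_i g(x)| \le c\, 2^{-\alpha(i+1)}$. Tensoring the one-dimensional operators yields $P_{\vec{i}} := P_{i_1} \otimes \cdots \otimes P_{i_d}$ and $\Delta_{\vec{i}} := \Delta_{i_1} \otimes \cdots \otimes \Delta_{i_d}$, so in particular $P_{\vec{i}} f(x) = f_{\vec{i}}$. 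Telescoping in each coordinate formally yields $f(x) = \sum_{\vec{i} \in \mathbb{Z}_{\ge 0}^d} \Delta_{\vec{i}} f(x)$, with absolute convergence to be justified by the surplus bound established below.

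The key step is to show that $|\Delta_{\vec{i}} f(x)| \lesssim 2^{-\alpha|\vec{i}|}$, where $|\vec{i}| := i_1 + \cdots + i_d$, using the mixed H\"older hypothesis \eqref{mixed}. Expanding the tensor difference coordinate-by-coordinate, $\Delta_{\vec{i}} f(x)$ becomes an alternating sum of values of $f$ at the corners of an axis-aligned set $R$ which, in each coordinate $j$ with $i_j \ge 1$, is the segment joining the centers of the dyadic intervals of lengths $2^{-i_j}$ and $2^{-i_j+1}$ containing $x_j$ (of length $2^{-i_j-1}$), and which collapses to the singleton $\{1/2\}$ in each coordinate $j$ with $i_j = 0$. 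Writing $r$ for the number of nonzero entries of $\vec{i}$, this $R$ is an $r$-dimensional box in $[0,1]^d$ of measure $|R| = 2^{-r}\cdot 2^{-|\vec{i}|}$, and up to a global sign one has $\Delta_{\vec{i}} f(x) = \delta_R f$, so \eqref{mixed} yields $|\Delta_{\vec{i}} f(x)| \le c|R|^\alpha \lesssim 2^{-\alpha|\vec{i}|}$. I expect this identification of the tensor surplus with a valid box mixed difference to be the most delicate step, especially in tracking the collapsed $i_j = 0$ coordinates and the signs arising from expanding the tensor product.

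Next, I would verify the combinatorial identity
\[
\sum_{|\vec{i}| \le m} \Delta_{\vec{i}} = \sum_{k=0}^{\min\{d-1,m\}} (-1)^k \binom{d-1}{k} \sum_{|\vec{i}| = m-k} P_{\vec{i}},
\]
which applied to $f$ at $x$ gives exactly the expression inside the absolute value in \eqref{smolyakeq}. Substituting $P_{\vec{i}} = \sum_{\vec{j} \le \vec{i}} \Delta_{\vec{j}}$ on the right and counting via \eqref{count2}, this reduces to the scalar identity
\[
\sum_{k=0}^{d-1} (-1)^k \binom{d-1}{k} \binom{t-k+d-1}{d-1} = 1 \quad \text{for all integers } t \ge 0,
\]
which follows from the fact that the $(d-1)$-st backward finite difference of the polynomial $P(y) := \binom{y+d-1}{d-1}$ (of degree $d-1$ with leading coefficient $1/(d-1)!$) is the constant $1$. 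Truncating at $k \le \min\{d-1,m\}$ is consistent because each omitted term has an inner binomial with top strictly less than its bottom and thus vanishes.

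Finally, combining the surplus bound with \eqref{count2} controls the tail:
\[
\left| f(x) - \sum_{|\vec{i}| \le m} \Delta_{\vec{i}} f(x) \right| \le \sum_{j>m} \binom{j+d-1}{d-1} \max_{|\vec{i}|=j} |\Delta_{\vec{i}} f(x)| \lesssim \sum_{j>m} j^{d-1}\, 2^{-\alpha j} \lesssim m^{d-1}\, 2^{-\alpha m},
\]
where the last step follows because the series is dominated by its leading term. This gives \eqref{smolyakeq}.
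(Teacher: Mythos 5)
Your proof is correct, and it takes a genuinely different route from the paper's. The paper proves Lemma \ref{lem2} by induction on the dimension $d$: it telescopes $f_{\vec{m},m}$ in the last coordinate, applies the inductive hypothesis to the rescaled one-coordinate differences $2^{\alpha j}(f_{\vec{m},j}-f_{\vec{m},j-1})$ (which, when $j$ is fixed, can be viewed as generated from a $(c,\alpha)$-mixed H\"older function on $[0,1]^d$), and finally rearranges the resulting double sum into the form \eqref{smolyakeq}, absorbing the binomial coefficients via Pascal's rule. You instead perform the full $d$-dimensional hierarchical-surplus decomposition at once: you identify $\Delta_{\vec{i}}f(x)$ up to sign with a mixed difference $\delta_R f$ over an $r$-dimensional box of measure $2^{-r-|\vec{i}|}$ (which gives the clean surplus bound $|\Delta_{\vec{i}}f(x)|\lesssim 2^{-\alpha|\vec i|}$ directly from Definition \ref{defmixed}), invoke the combination-technique identity to rewrite the Smolyak expression as $\sum_{|\vec{i}|\le m}\Delta_{\vec{i}}f(x)$, and bound the tail by $\sum_{j>m}\binom{j+d-1}{d-1}2^{-\alpha j}\lesssim m^{d-1}2^{-\alpha m}$. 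Your route is more modular and matches the classical sparse-grid presentation; the price is that you must separately establish the combination identity (via backward finite differences of $\binom{y+d-1}{d-1}$), which the paper obtains as a byproduct --- indeed the paper records this identity as a remark after Lemma \ref{lem2} by applying the lemma to a constant function. Conversely, the paper's induction keeps the bookkeeping at one new coordinate per step, at the cost of a somewhat more opaque rearrangement argument. One small remark: the claim that $\Delta_{\vec{i}}f(x)=\pm\delta_R f$ deserves the one extra sentence you anticipated --- namely, that in each coordinate with $i_j\ge 1$ the two relevant centers are at distance exactly $2^{-i_j-1}$ and both lie in $(0,1)$, so $R\subset[0,1]^d$ and \eqref{mixed} applies; with that noted, the argument is complete.
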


We remark that an elementary proof of this lemma for the case $d=2$ can be found
in \cite{Marshall2018b}, while the general $d$-dimensional proof is given
below.

\begin{proof}[Proof of Lemma \ref{lem2}] Fix $x \in [0,1]^d$, and let $\vec{m}
=(m,\ldots,m)$ denote the $d$-dimensional vector whose entries are all $m$.
Recall that $f_{\vec{i}}$ denotes the value of $f$ at the center of the dyadic
box that contains $x$ and has dimensions $2^{-i_1} \times \cdots \times 2^{-i_d}$.  Thus $f_{\vec{m}}$ is the value of $f$ at the center of the dyadic
box that contains $x$ and has dimension $2^{-m} \times \cdots \times 2^{-m}$.
Since $\alpha$-mixed H\"older functions are $\alpha$-H\"older continuous we
conclude that 
$$ 
\left| f(x) - f_{\vec{m}} \right| \lesssim 2^{-\alpha
m}.
$$ 
Thus, by the triangle inequality, in order to establish Lemma \ref{lem2} it
suffices to show that 
$$
\left| f_{\vec{m}} - \sum_{k = 0}^{\min \{d-1,m\}} (-1)^k {d -1 \choose k}
\sum_{i_1 + \cdots + i_d = m - k} f_{\vec{i}} \right| \lesssim 2^{-\alpha m}
m^{d-1} .  
$$ 
We will establish this inequality by inducting on the dimension $d$.  The base
case $d=1$ of the induction is trivial, so we assume that this inequality holds
in dimension $d$, and will show it holds in dimension $d+1$. Fix $x \in
[0,1]^{d+1}$, and let $f_{\vec{i},i_{d+1}}$ denote the value of $f$ at
the center of the dyadic box that has dimensions $2^{-i_1} \times \cdots \times
2^{-i_d} \times 2^{-i_{d+1}}$ and contains $x$.  We want to show that 
$$ 
\left|
f_{\vec{m},m} - \sum_{k=0}^{\min\{d,m\}} (-1)^k {d \choose k}
\sum_{i_1+\cdots+i_{d+1} = m-k}
f_{\vec{i},i_{d+1}} \right| \lesssim 2^{-\alpha m} m^{d}.
$$ 
First we write
$f_{\vec{m},m}$ as a telescopic series 
\begin{equation} \label{tel}
f_{\vec{m},m} = \sum_{j=1}^m \left( f
_{\vec{m},j} - f_{\vec{m},j-1} \right) + f_{\vec{m},0}.  
\end{equation}
The key observation
in the proof of this lemma is that when $j$ is fixed, the inductive hypothesis
can be applied to $g_{\vec{m}} = 2^{\alpha j} ( f_{\vec{m},j} -
f_{\vec{m},j-1})$ to conclude that 
\begin{multline}  \label{keysmolyak}
\left| \big( f _{\vec{m},j} - f_{\vec{m},j-1} \big)
\vphantom{\sum_{k=0}^{\min\{d-1,m-j\}} }\right.\\ \left.  -
\sum_{k=0}^{\min\{d-1,m-j\}} (-1)^k {d - 1 \choose k} \sum_{i_1+\cdots+i_d =
m-j-k} \left(f_{\vec{i},j} - f_{\vec{i},j-1} \right) \right| \lesssim 
2^{-\alpha m} m^{d-1}.  
\end{multline} 
Indeed, when $j$ is fixed, $g_{\vec{m}} =
2^{\alpha j} ( f_{\vec{m},j} - f_{\vec{m},j-1})$ can be viewed as having been
generated from a real-valued $(c,\alpha)$-mixed H\"older function on $[0,1]^d$.
Combing \eqref{tel} and \eqref{keysmolyak} yields 
\begin{multline*}
\left|f _{\vec{m},m} -  \left( \sum_{j=1}^m  \sum_{k=0}^{\min\{d-1,m-j\}} (-1)^k
{d - 1 \choose k} \sum_{i_1+\cdots+i_d = m-j-k} \left(f_{\vec{i},j} -
f_{\vec{i},j-1} \right) \right. \right) \\ \left.  -
\sum_{k=0}^{\min\{d-1,m\}} (-1)^k {d - 1 \choose k} \sum_{i_1+\cdots+i_d =
m-k} f_{\vec{i},0} \right| \lesssim m^{d} 2^{-\alpha m}.
\end{multline*}
We claim that, up to rearranging terms, the proof is complete.
Indeed, collecting the terms $f_{\vec{i},j}$ such that $k = 0$ and $i_1 + \cdots
+ i_d + j = m$  gives 
$$
\sum_{j=1}^m  \sum_{i_1+\cdots+i_d = m-j} f_{\vec{i},j} + \sum_{i_1+\cdots+i_d =
m} f_{\vec{i},0}  = \sum_{i_1+\cdots+i_d + i_{d+1} = m} f_{\vec{i},i_{d+1}}.  
$$
Moreover, collecting the terms $f_{\vec{i},j}$ such
that $k \ge 1$, $j \ge 1$, and $i_1 + \cdots + i_{d} + j = m -k$ gives $$ (-1)^k
{d-1 \choose k} f_{\vec{i},j} - (-1)^{k-1} {d - 1 \choose k-1} f_{\vec{i},j} =
(-1)^k {d \choose k} f_{\vec{i},j}; $$ summing these representations over
$j$ and $k$ gives the desired rearrangement.  \end{proof}

\begin{remark} 
Applying Lemma \ref{lem2} to the constant function gives the identity
$$ 1 =
\sum_{k = 0}^{\min\{d-1,m\}} (-1)^k {d -1 \choose k} {m - k + d - 1 \choose d -
1}, $$ 
which holds for all $d,m \ge 1$. 
\end{remark}

\subsection{Randomized Kaczmarz}
In addition to Smolyak's approximation method, we will use an result of Strohmer
and Vershynin \cite{StrohmerVershynin2009} about the randomized Kaczmarz
algorithm. The following lemma is a special case of the main result
of \cite{StrohmerVershynin2009}.

\begin{lemma}[Strohmer, Vershynin] \label{randomkaczmarz}
Let $A$ be an $N \times p$ matrix where $N \ge p$ whose rows are of equal
magnitude, and let $A w = b$ be a consistent linear system.  Suppose that
indices $I_1,I_2,\ldots$ are chosen independently and uniformly at random from
$\{1,\ldots,N\}$, and let an initial vector $w_0 \in \mathbb{R}^p$ be given. For
$n = 1,2,\ldots$ define 
$$
w_n := w_{n-1} +  \frac{b_{I_n} -\langle a_{I_n}, w_{n-1} \rangle
}{\|a_{I_n}\|_2^2}
a_{I_n}, 
$$
where $a_j$ denotes the $j$-th row of $A$, and $b_j$ denotes the $j$-th entry of
$b$. Then,
$$
\mathbb{E} \|w_n - w\|^2_2 \le (1 - \kappa^{-2})^n \|w_0 -
w\|_2^2,
$$
for $n = 1,2,\ldots$, where $\kappa^2 := \sum_{j=1}^p \sigma_j^2/\sigma_p^2$,
and where $\sigma_1,\ldots,\sigma_p$ are the singular values of $A$.  
\end{lemma}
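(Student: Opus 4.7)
The plan is to follow the geometric argument of Strohmer and Vershynin. The starting observation is that the Kaczmarz update is exactly the orthogonal projection of $w_{n-1}$ onto the affine hyperplane $H_{I_n} := \{v \in \mathbb{R}^p : \langle a_{I_n}, v\rangle = b_{I_n}\}$. Because $Aw = b$, the true solution $w$ also lies in $H_{I_n}$, so the displacement $w_n - w_{n-1}$ is orthogonal to $w_n - w$. An application of the Pythagorean identity, followed by the explicit formula for the Kaczmarz step, yields
$$
\|w_n - w\|_2^2 = \|w_{n-1} - w\|_2^2 - \frac{|\langle a_{I_n}, w_{n-1} - w\rangle|^2}{\|a_{I_n}\|_2^2}.
$$

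Next I would take expectation over $I_n$ conditional on $w_{n-1}$. The equal-magnitude hypothesis makes $\|a_{I_n}\|_2^2 = r^2$ a deterministic constant, so averaging the numerator over the uniform choice of $I_n$ gives
$$
\mathbb{E}\left[\|w_n - w\|_2^2 \,\big|\, w_{n-1}\right] = \|w_{n-1} - w\|_2^2 - \frac{\|A(w_{n-1} - w)\|_2^2}{\|A\|_F^2},
$$
where the identity $\sum_{j=1}^N \|a_j\|_2^2 = \|A\|_F^2 = \sum_{j=1}^p \sigma_j^2$ turns $Nr^2$ into the Frobenius norm squared. Since $\sigma_p > 0$ is implicit in the definition of $\kappa$, the matrix $A$ has full column rank, and hence $\|Av\|_2 \ge \sigma_p \|v\|_2$ for every $v \in \mathbb{R}^p$. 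Substituting $v = w_{n-1} - w$ produces the one-step contraction
$$
\mathbb{E}\left[\|w_n - w\|_2^2 \,\big|\, w_{n-1}\right] \le (1 - \kappa^{-2})\,\|w_{n-1} - w\|_2^2.
$$

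To finish I would iterate this bound via the tower property of conditional expectation, which is equivalent to a one-line induction on $n$, to obtain $\mathbb{E}\|w_n - w\|_2^2 \le (1 - \kappa^{-2})^n \|w_0 - w\|_2^2$.

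The only step requiring real care is invoking $\|Av\|_2 \ge \sigma_p \|v\|_2$ without restricting $v$ to the row space of $A$: this is where the full-column-rank condition hidden in the finiteness of $\kappa$ is used. If that hypothesis were dropped, the null-space component of $w_0 - w$ would be frozen by the iteration and the geometric decay would only apply to the row-space projection; under the stated assumptions, however, the row space is all of $\mathbb{R}^p$ and the argument goes through unchanged.
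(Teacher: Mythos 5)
Your proof is correct, and it reconstructs the Strohmer--Vershynin argument faithfully. The paper itself does not prove this lemma --- it simply cites \cite{StrohmerVershynin2009} and invokes the statement as a special case of their main theorem --- so there is no in-paper proof to compare against. One small point worth making explicit: the equal-row-magnitude hypothesis is doing more than just making $\|a_{I_n}\|_2^2$ deterministic. In the original Strohmer--Vershynin result, rows are sampled with probability proportional to $\|a_j\|_2^2/\|A\|_F^2$; under the equal-norm assumption this weighted distribution coincides with the uniform distribution used in the statement, which is exactly why the lemma as stated (with uniform sampling) is a special case of their theorem. Your computation silently relies on this coincidence, and it would be worth a sentence to flag it.
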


Lemma \ref{randomkaczmarz} together with the embedding defined in \S
\ref{embedding} are the main ingredients of the proof of Theorem \ref{thm1}. In
particular, we use the randomized Kaczmarz algorithm to solve a noisy linear
system $A x \approx b + \epsilon$. The error analysis of the randomized Kaczmarz
algorithm for noisy linear systems was first performed by Needell
\cite{Needell2010}.  We require a slightly modified version of the result of
Needell so we use Lemma \ref{randomkaczmarz} directly in a similar argument to
that in \cite{Needell2010}.

\section{Embedding points} \label{embedding}
\subsection{Summary} In this section, we define an embedding $\Psi$
of points in $[0,1]^d$ into a high dimensional Euclidean space $\mathbb{R}^p$.
We want the embedding to have two properties. First, we do not want the
embedding to contain redundant information. In particular, we want the
coordinates of the embedding to be uncorrelated.  Second, the embedding should be defined such that mixed
H\"older functions can be approximated by linear functionals in the embedding
space at the error rate of Lemma \ref{lem2}. Below, in \S \ref{defembed}, we
define such an embedding $\Psi$, and establish the two described properties
in Lemmas \ref{expect0} and \ref{lem3}.

\subsection{Definition of the embedding}  \label{defembed} In this section, we
define an embedding $\Psi : [0,1]^d \rightarrow \mathbb{R}^p$, where
$$ 
p :=
\sum_{k = 0}^{d-1} 2^{m-k} {m \choose k} {d-1 \choose k}, 
$$ 
for some fixed integer scale $m$.  If $I$ is an interval, then let $I^-$ and
$I^+$ denote the left and right halves of $I$, respectively.  More generally,
if $R = I_1 \times \cdots \times I_d$ is a box in $[0,1]^d$, let $R_j^-$
and $R_j^+$ denote the left and right halves of $R$ with respect to the
$j$-th coordinate; more precisely, 
$$
R_j^\pm := J_1 \times \cdots \times J_d, \quad \text{where} \quad J_i := \left\{
\begin{array}{cc} I_j^\pm & \text{if } i = j \\ I_i & \text{if } i \not =
j. \end{array} \right.  
$$ 
The embedding $\Psi : [0,1]^d \rightarrow \mathbb{R}^p$ is defined using
indicator functions of dyadic boxes. If $R$ is a box, then $\chi_R(x)$ denotes
its indicator function. Each coordinate of $\Psi$ is associated with a triple
$(k,\vec{r},R)$, where $k$ is an integer scale, $\vec{r}$ is a $k+1$-dimensional
integer vector, and $R$ is a dyadic box. In the following, we define the set of
triples $\mathcal{T}$ used to define $\Psi$ by introducing helper sets
$\mathcal{R}_k$ and $\mathcal{D}_{k,\vec{r}}$.

Fix $k \in \{0,\ldots,d-1\}$, and define
$$ \mathcal{R}_{k} := \{ \vec{r} =
(r_0,\ldots,r_k) \in \mathbb{Z}^{k+1} : 1 = r_0 < r_1 < \cdots < r_{k} \le d \}.
$$ 
If
$k \in \{0,\ldots,d-1\}$ and $\vec{r} \in \mathcal{R}_K$, then we define the set
of dyadic boxes $\mathcal{D}_{k,\vec{r}}$ by $$ \mathcal{D}_{k,\vec{r}} = \big\{
R = I_1 \times \cdots \times I_d \in \mathcal{D}_d : |R| = 2^{k-m} \wedge I_j =
[0,1] \text{ if } j \not \in \{ r_0,\ldots,r_{k}\} \big\}, 
$$
where, recall, that $\mathcal{D}_d$ is the set of all dyadic boxes in
$[0,1]^d$. Finally, we define the collection of triples $\mathcal{T}$ by
\begin{equation} \label{triples}
\mathcal{T} := \{ (k,\vec{r},R) : k \in \{0,\ldots,d-1 \} \wedge \vec{r} \in
\mathcal{R}_k \wedge R \in
\mathcal{D}_{k,\vec{r}} \}.
\end{equation}
 For each $k \in \{0,\ldots,d-1\}$, there are ${
d-1 \choose k}$ possible choices of $\vec{r} \in \mathcal{R}_k$, and $2^{m-k}{m
\choose k}$ possible choices of $R \in \mathcal{D}_{k,\vec{r}}$, so we conclude
that $\# \mathcal{T} = p$. The embedding $\Psi : [0,1]^d \rightarrow
\mathbb{R}^p$ is defined by fixing an enumeration of $\mathcal{T}$; we emphasize
that all subsequent calculations are independent of the choice of enumeration.

\begin{definition} \label{Psi} Let
$(k_1,\vec{r}_1,R_1),\ldots,(k_p,\vec{r}_p,R_p)$ be a fixed enumeration of
$\mathcal{T}$. We define the embedding $\Psi : [0,1]^d \rightarrow \mathbb{R}^p$
by $$ \Psi(x) = \left( \Psi_{k_1,\vec{r}_1,R_1}(x),\ldots, \Psi_{k_p, \vec{r}_p,
R_p}(x) \right), $$ 
where 
\begin{equation}  \label{PsikrR} 
\Psi_{k,\vec{r},R}(x) :=
\left\{ \begin{array}{cl} \chi_R(x) & \text{if } k = 0 \\ 2^{-k/2}
\prod_{j=1}^{k} ( \chi_{R_{r_j}^+}(x) - \chi_{R_{r_j}^-}(x) ) & \text{if } k
\not = 0.  \end{array} \right.
\end{equation} 
\end{definition}

\subsection{Properties of the embedding} In the following, we establish two key
properties of the embedding $\Psi : [0,1]^d \rightarrow \mathbb{R}^p$. First, in
Lemma \ref{expect0} we show that distinct coordinates of the embedding are
uncorrelated.  Second, in Lemma \ref{lem3} we show that mixed H\"older functions
can be efficiently approximated by linear functionals in the embedding space.

\begin{lemma} \label{expect0} Suppose that $X$ is chosen uniformly at random
from $[0,1]^d$. Then 
$$ 
\mathbb{E} \left( \Psi_i(X) \Psi_j(X) \right) = \left\{ \begin{array}{cc} 2^{-m}
& \text{if } i = j \\ 0 & \text{if } i \not = j, \\ \end{array} \right.  
$$ 
for all $i,j \in \{1,\ldots,p\}$, where $\Psi_j(X)$ denotes the $j$-th coordinate of
$\Psi(X) \in \mathbb{R}^p$.
\end{lemma}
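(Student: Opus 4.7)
The plan is to exploit the independence of the coordinates of a uniform random point in $[0,1]^d$ to factor the expectation into a product of one-dimensional integrals, and then to show that for distinct triples at least one of those integrals vanishes.

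First I would rewrite every embedding coordinate as a tensor product over the $d$ axes. Distributing the product in \eqref{PsikrR} using the identity $\chi_{R_{r_j}^\pm}(x) = \chi_{I_{r_j}^\pm}(x_{r_j}) \prod_{i \ne r_j} \chi_{I_i}(x_i)$, one verifies that
\[
\Psi_{k, \vec{r}, R}(x) = 2^{-k/2} \prod_{i=1}^d g_i(x_i), \qquad g_i(x_i) := \begin{cases} \chi_{I_i^+}(x_i) - \chi_{I_i^-}(x_i) & \text{if } i \in \mathcal{S}, \\ \chi_{I_i}(x_i) & \text{if } i \notin \mathcal{S}, \end{cases}
\]
where $\mathcal{S} := \{r_1, \ldots, r_k\}$ (empty if $k = 0$). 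The defining constraint of $\mathcal{D}_{k, \vec{r}}$ forces $I_i = [0,1]$ whenever $i \notin \{1\} \cup \mathcal{S}$. Independence of the coordinates of $X$ then gives $\mathbb{E}[\Psi_{k_1, \vec{r}_1, R_1}(X)\, \Psi_{k_2, \vec{r}_2, R_2}(X)] = 2^{-(k_1+k_2)/2} \prod_{i=1}^d J_i$, where $J_i := \int_0^1 g_i^{(1)}(t)\, g_i^{(2)}(t)\,dt$. The diagonal case is then immediate: both Haar and indicator factors square to $\chi_{I_i}$, so $\prod_i J_i = |R| = 2^{k-m}$ and the prefactor $2^{-k}$ yields $2^{-m}$.

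For the off-diagonal case I would first record the one-dimensional catalogue: on dyadic intervals in $[0,1]$, $J_i \ne 0$ requires intersecting intervals in the indicator--indicator case, equal intervals in the Haar--Haar case (by the standard orthogonality of Haar functions on nested dyadic intervals), and strict containment of the Haar interval inside the indicator interval in the mixed case. The analysis then splits on whether the Haar patterns agree. Case one: $\mathcal{S}_1 \ne \mathcal{S}_2$. Choosing $i^* \in \mathcal{S}_2 \setminus \mathcal{S}_1$, the constraint $\mathcal{S}_l \subset \{2, \ldots, d\}$ coming from the definition of $\mathcal{R}_{k_l}$ forces $i^* \notin \{1\} \cup \mathcal{S}_1$, so $I_{1, i^*} = [0,1]$ and $J_{i^*} = \int_0^1 \bigl(\chi_{I_{2, i^*}^+} - \chi_{I_{2, i^*}^-}\bigr)\,dt = 0$. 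Case two: $\mathcal{S}_1 = \mathcal{S}_2 =: \mathcal{S}$, so $(k_1, \vec{r}_1) = (k_2, \vec{r}_2)$ and distinctness of triples gives $R_1 \ne R_2$. If the discrepancy includes some $i^* \in \mathcal{S}$, then $J_{i^*} = 0$ by Haar--Haar orthogonality; otherwise $I_{1, i} = I_{2, i}$ for all $i \in \mathcal{S}$, and the measure identity $|R_1| = |R_2|$ together with $I_{l, i} = [0, 1]$ for $i \notin \{1\} \cup \mathcal{S}$ forces $|I_{1, 1}| = |I_{2, 1}|$; two distinct same-size dyadic intervals in $[0,1]$ are disjoint, so $J_1 = 0$.

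I expect the main difficulty to be the bookkeeping around the asymmetric role of the coordinate $r_0 = 1$, which is always an element of $\vec{r}$ yet never contributes a Haar factor in \eqref{PsikrR}. It is precisely the fact that elements of $\mathcal{S}_l$ lie in $\{2, \ldots, d\}$ (and so can never equal $1$) that drives the vanishing in Case one, and keeping the distinction between coordinate $1$ and the Haar coordinates straight is what the whole argument hinges on.
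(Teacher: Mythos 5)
Your argument is correct. Where the paper keeps the computation global on $[0,1]^d$ and exhibits a sign-flip symmetry (moving the $i_0$-th coordinate from $I_{i_0}^+$ to $I_{i_0}^-$ is a measure-preserving involution that flips the sign of one factor and fixes the other, so the expectation vanishes), you instead factor $\Psi_{k,\vec{r},R}$ explicitly as a tensor product over the $d$ axes and use independence to reduce $\mathbb{E}[\Psi_i\Psi_j]$ to $2^{-(k_1+k_2)/2}\prod_i J_i$, then show a single $J_i$ vanishes. These are two renderings of the same underlying fact --- both isolate one coordinate where a one-dimensional Haar/indicator orthogonality holds --- but your decomposition makes the diagonal case essentially automatic and gives a cleaner case structure: the paper separately treats $R \cap R' = \emptyset$, then $|I_{i_0}| \ne |I_{i_0}'|$, then the one-sided-Haar case, while your split ($\mathcal{S}_1 \ne \mathcal{S}_2$ versus $\mathcal{S}_1 = \mathcal{S}_2$, subdividing the latter into a Haar-level discrepancy or a coordinate-$1$ discrepancy) absorbs the disjoint-box case transparently, and your measure argument for why a residual coordinate-$1$ discrepancy forces equal-sized, hence disjoint, intervals makes explicit a step the paper leaves implicit when it asserts the existence of $i_0 > 1$. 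One slip to correct: the mixed entry of your one-dimensional catalogue is stated backwards. For dyadic intervals, $\int \chi_I\,(\chi_{I'^+} - \chi_{I'^-})\,dt \ne 0$ forces the \emph{indicator} interval $I$ to be strictly contained in the \emph{Haar} interval $I'$, not the other way around; if instead $I \supseteq I'$, the Haar function integrates to zero over $I$. This error is harmless here --- in Case 1 the indicator is $\chi_{[0,1]} \equiv 1$ and you compute $J_{i^*}$ directly, and Case 2 uses only indicator--indicator and Haar--Haar pairs --- but the catalogue as written would, if trusted rather than bypassed, have pointed you to the wrong conclusion in Case 1.
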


\begin{proof}[Proof of lemma \ref{expect0}] 
There are two cases to consider: $i=j$ and $i \not = j$.  Recall that each
coordinate of $\Psi(X)$ corresponds to a different triple $(k,\vec{r},R) \in
\mathcal{T}$.  Thus, to establish the case $i = j$ it suffices to show that 
$$
\Psi_{k,\vec{r},R}(X)^2 = 2^{-m}, 
$$ 
for all $(k,\vec{r},R) \in \mathcal{T}$.
Observe that if
$X \in R$, then $\Psi_{k,\vec{r},R}(X)^2 = 2^{-k}$, while if $X \not \in R$, then
$\Psi_{k,\vec{r},R}(X)^2 = 0$. Since the probability that $X$ is
contained in $R$ is equal to $$ \mathbb{P}(X \in R) = |R| = 2^{k-m}, $$ we
conclude that $\mathbb{E}( \Psi_{k,\vec{r},R}(X)^2 ) = 2^{-m}$. Next, to
establish the case $i
\not =j$ it suffices to show that if  
$(k,\vec{r},R) \not = (k',\vec{r'},R')$, then
$$ \mathbb{E} (\Psi_{k,\vec{r},R}(X)
\Psi_{k',\vec{r'},R'}(X) ) = 0. $$
If $R$ and $R'$ are disjoint, then the product of
$\Psi_{k,\vec{r},R}(x)$ and $\Psi_{k',\vec{r'},R'}(x)$ is equal to zero for all
$x \in [0,1]^d$.
Suppose that $R \cap R' \not = \emptyset$, and let $R = I_1 \times \cdots \times
I_d$ and $R' = I_1' \times \cdots \times I_d'$.  Under these assumptions, there
must exist an index $i_0 > 1$ such that either 
$$
|I_{i_0}| \not = |I_{i_0}'| \text{ or }   i_0  \in \{r_1,\ldots,r_k\} \setminus
\{r_1',\ldots,r_k'\} \text{ or }  i_0 \in \{r_1',\ldots,r_k'\}
\setminus \{r_1,\ldots,r_k\}.
$$ 
Indeed, otherwise $(k,\vec{r},R)$ and $(k',\vec{r'},R')$ would be identical.
Since the cases $i_0  \in \{r_1,\ldots,r_k\} \setminus \{r_1',\ldots,r_k'\}$ and
$ i_0 \in \{r_1',\ldots,r_k'\} \setminus \{r_1,\ldots,r_k\}$ are symmetric,
without loss of generality we can consider two cases.

\subsubsection*{Case 1: $|I_{i_0}| \not = |I_{i_0}'|$}  Since $I_{i_0}$ and
$I_{i_0}'$ are dyadic intervals that have nonempty intersection we may assume,
without loss of generality, that $I_{i_0} \subsetneq I_{i_0}'$. If the
$i_0$-th coordinate of a point in $R$ is moved from $I_{i_0}^+$ to $I_{i_0}^-$,
then the function $ \Psi_{k',\vec{r'},R'}$ remains constant, while the function
$\Psi_{k,\vec{r},R}$ changes sign, see Definition \ref{Psi}. We conclude that the expected value of the
product of these functions is zero.

\subsubsection*{Case 2: $|I_{i_0}| = |I_{i_0}'|$ and $i_0  \in \{r_1,\ldots,r_k\}
\setminus \{r_1',\ldots,r_k'\}$}

We argue in a similar way to Case 1. If the $i_0$-th coordinate of a point in
$R$ is moved from $I_{i_0}^+$ to $I_{i_0}^-$, then the function
$\Psi_{k',\vec{r'},R'}$ is constant because $i_0 \not \in \{r_1',\ldots,r_k'\}$,
while the function $\Psi_{k,\vec{r},R}$ changes sign because $i_0 \in
\{r_1,\ldots,r_k\}$. It follows that $\mathbb{E}( \Psi_{k,\vec{r},R}(X)
\Psi_{k',\vec{r'},R'}(X)) = 0$.  This completes the proof.
\end{proof}

Recall that $\mathcal{M}^{\alpha}([0,1]^d)$ is the space of all functions $f :
[0,1]^d \rightarrow \mathbb{R}$ that are $(c,\alpha)$-mixed H\"older for some
constant $c > 0$. In the following lemma we show that functions in
$\mathcal{M}^{\alpha}([0,1]^d)$ can be effectively approximated by linear
functionals in the embedding space $\mathbb{R}^p$.

\begin{lemma} \label{lem3}
There exists an embedding $\Phi : \mathcal{M}^{\alpha}([0,1]^d)
\rightarrow \mathbb{R}^p$ such that 
\begin{equation} \label{rate}
\left| \langle \Phi(f), \Psi(x) \rangle  - f(x) \right| \lesssim
2^{-\alpha m} m^{d-1},
\end{equation}
where the implicit constant only depends on the mixed H\"older constant $c > 0$
of the function, and dimension $d \ge 1$.
\end{lemma}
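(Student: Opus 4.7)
My plan is to induct on the dimension $d$, partitioning the triples $\mathcal T$ along coordinate $d$ in a way that mirrors the telescoping argument in the proof of Lemma \ref{lem2}. In the base case $d=1$, the embedding consists of the indicators $\chi_{I_1}$ for dyadic intervals $I_1 \subset [0,1]$ of length $2^{-m}$; setting $\Phi(f)_{I_1} = f(c_{I_1})$ gives $|\langle \Phi(f), \Psi(x) \rangle - f(x)| = |f(c_{I_1}) - f(x)| \le c\, 2^{-\alpha m}$ by 1D H\"older continuity. For $d \ge 2$, I split $\mathcal T$ into Group A (triples with $d \notin \vec r$, for which $\Psi_{k,\vec r,R}$ is independent of $x_d$; under the identification $R = R' \times [0,1]$ these triples are in bijection with the $(d-1)$-dimensional embedding at level $m$) and Group B (triples with $d = r_k$, for which the factorization
\begin{equation*}
\Psi_{k,\vec r,R}(x) = \Psi^{(d-1)}_{k-1,\vec r\,',R'}(x_1,\ldots,x_{d-1}) \cdot 2^{-1/2}\bigl[\chi_{I_d^+}(x_d) - \chi_{I_d^-}(x_d)\bigr]
\end{equation*}
holds with $\vec r\,' = (1,r_1,\ldots,r_{k-1})$ and $R' = I_1 \times \cdots \times I_{d-1}$; the scale identity $|R'| = 2^{(k-1) - (m-j-1)}$ for $|I_d| = 2^{-j}$ shows that the Group B triples at scale $|I_d| = 2^{-j}$ are in bijection with the $(d-1)$-dimensional embedding at level $m - j - 1$).

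To define $\Phi(f)$, I mimic the Haar multiresolution expansion of $f$ in coord $d$. Let $\bar f(x_1,\ldots,x_{d-1}) := \int_0^1 f(x_1,\ldots,x_{d-1},y)\,dy$ and, for each dyadic $I_d$ of length $2^{-j}$, $q_{j,I_d}(x_1,\ldots,x_{d-1}) := \tfrac{1}{2}(\bar f_{I_d^+} - \bar f_{I_d^-})$, where $\bar f_I := \frac{1}{|I|}\int_I f(\cdot,y)\,dy$. I set $\Phi(f)$ on Group A equal to the inductive embedding $\Phi^{(d-1,m)}(\bar f)$; on Group B with $|I_d| = 2^{-j}$ I set $\Phi(f)$ equal to $\sqrt{2}\cdot \Phi^{(d-1,m-j-1)}(q_{j,I_d})$, the factor $\sqrt 2$ compensating the $2^{-1/2}$ extracted from $\Psi$. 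The factorization then gives that $\langle \Phi(f), \Psi(x) \rangle$ exactly targets the scale-$2^{-m}$ Haar projection of $f$ in coord $d$, namely $\bar f(x_1,\ldots,x_{d-1}) + \sum_{j=0}^{m-1} q_{j,I_d^{(j)}}(x_1,\ldots,x_{d-1})\bigl[\chi_{I_d^{(j),+}}(x_d) - \chi_{I_d^{(j),-}}(x_d)\bigr]$, where $I_d^{(j)}$ denotes the unique dyadic interval of length $2^{-j}$ containing $x_d$.

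The error then splits as $|f(x) - \langle \Phi(f), \Psi(x)\rangle| \le |f(x) - P_m^{(d)} f(x)| + |P_m^{(d)} f(x) - \langle \Phi(f), \Psi(x) \rangle|$, where $P_m^{(d)} f$ denotes this scale-$2^{-m}$ projection in coord $d$. The first term is $\lesssim 2^{-\alpha m}$ by 1D H\"older continuity in $x_d$. For the second, the inductive hypothesis on Group A contributes $\lesssim 2^{-\alpha m} m^{d-2}$ (since $\bar f$ inherits the mixed H\"older constant of $f$), and each Group B scale $j$ contributes at most $\lesssim 2^{-\alpha j} \cdot 2^{-\alpha(m-j-1)}(m-j-1)^{d-2} \lesssim 2^{-\alpha m}(m-j-1)^{d-2}$; summing over $j = 0,\ldots,m-1$ gives $\lesssim 2^{-\alpha m} m^{d-1}$, as required. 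I expect the main technical step to be the accelerated mixed H\"older bound $|\delta_R q_{j,I_d}| \lesssim c \cdot 2^{-\alpha j} |R|^\alpha$: writing $\bar f_{I_d^+} - \bar f_{I_d^-}$ as an integral of first differences of $f$ in coord $d$, a mixed difference $\delta_R$ in the first $d-1$ coordinates becomes an integral of $\delta_{R \times [c_{I_d}-s,\,c_{I_d}+s]} f$, to which the full mixed H\"older condition applied with an $(r+1)$-dimensional box furnishes the extra $2^{-\alpha j}$ decay that closes the induction.
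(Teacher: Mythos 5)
Your proof is correct (modulo a small boundary adjustment noted below) and takes a genuinely different route from the paper. The paper writes the Smolyak point-value approximation as an inner product $\langle \Phi'(f), \Psi'(x) \rangle$ against a redundant embedding $\Psi'$ indexed by all dyadic boxes of measure $2^{-m}$, and then performs a sequence of Haar-type compressions that reduce $\Psi'$ to $\Psi$ while preserving the inner product; the rate \eqref{rate} is inherited directly from Lemma~\ref{lem2}. You instead induct on $d$, splitting $\mathcal{T}$ into Group~A (coordinates independent of $x_d$, bijective with the $(d-1)$-dimensional embedding at level $m$) and Group~B (coordinates that factor as a $(d-1)$-dimensional coordinate times a Haar oscillation in $x_d$; for each dyadic $I_d$ with $|I_d| = 2^{-j}$, bijective with the $(d-1)$-dimensional embedding at level $m-j-1$). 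You then set $\Phi$ recursively so that $\langle \Phi(f), \Psi(x) \rangle$ approximates the conditional expectation $P^{(d)}_m f$ of $f$ onto the dyadic filtration at scale $2^{-m}$ in the $d$-th variable, and you close the recursion with the accelerated mixed H\"older estimate $|\delta_{R'} q_{j,I_d}| \lesssim c\, 2^{-\alpha j} |R'|^\alpha$, a new ingredient not in the paper, which you correctly obtain by writing $\bar f_{I_d^+} - \bar f_{I_d^-}$ as an integral of a first difference of $f$ in $x_d$ and then invoking the mixed condition for $(r+1)$-dimensional boxes. Your route bypasses Lemma~\ref{lem2} and the compression step entirely; its trade-off is that your $\Phi(f)$ involves integrals of $f$ rather than Smolyak's point values, which is harmless here since the lemma only asserts existence of $\Phi$, and that is all the proof of Theorem~\ref{thm1} uses ($w = \Phi(f)$ is only a theoretical reference solution for the Kaczmarz iteration).

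The one spot that needs attention is the boundary of the recursion: for Group~B with $k = 1$ and $j = m-1$, the inductive level is $m' = m - j - 1 = 0$, and your stated per-scale bound $2^{-\alpha j}\, 2^{-\alpha(m-j-1)} (m-j-1)^{d-2}$ vanishes for $d \ge 3$, which cannot be correct. Strengthen the inductive hypothesis to $\lesssim C(g)\, 2^{-\alpha m'} \max(m',1)^{d-2}$; at $m' = 0$ the $(d-1)$-dimensional embedding has the single coordinate $\chi_{[0,1]^{d-1}}$, and the claim reduces to the boundedness of the oscillation of a mixed H\"older function in terms of its constant. This does not change the sum over $j$, which remains $\lesssim 2^{-\alpha m} m^{d-1}$.
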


The proof strategy for this lemma is as follows. First, we define an embedding
$\Psi' : [0,1]^d \rightarrow \mathbb{R}^P$, where $P > p$. The embedding $\Psi'$
will not satisfy the uncorrelated coordinate property of Lemma \ref{expect0},
but it will be easier to define a corresponding embedding $\Phi' :
\mathcal{M}^\alpha([0,1^d) \rightarrow \mathbb{R}^P$ such that
\eqref{rate} holds. Second, we will describe an iterative compression
procedure that can be applied to the embeddings $\Psi'$ and $\Phi'$, which
preserves  \eqref{rate}, and results in the
embeddings $\Psi$ and $\Phi$.

\begin{proof}[Proof of Lemma \ref{lem3}]
Let $R_1,\ldots,R_P$ be a fixed enumeration of the $P$ dyadic boxes of measure
$2^{-m}$ in $[0,1]^d$; recall from \S \ref{preliminaries} that
$$
P = 2^m {m + d - 1 \choose d-1} \sim 2^m m^{d-1}.
$$
We define the embedding $\Psi' : [0,1]^d \rightarrow \mathbb{R}^P$
by
$$
\Psi'(x) = \left( \chi_{R_1}(x),\ldots,\chi_{R_P}(x) \right),
$$
and will define a corresponding embedding $\Phi' :
\mathcal{M}^\alpha([0,1]^d) \rightarrow \mathbb{R}^P$ such that
$$
| \langle \Phi'(f),  \Psi'(x)\rangle - f(x)| \lesssim 2^{-\alpha m} m^{d-1}.
$$
Fix $f \in \mathcal{M}^\alpha([0,1]^d)$
and $x \in [0,1]^d$, and let $c_R$ denote the center of the dyadic box $R$; we
can write the result of Lemma \ref{lem2} as
\begin{equation} \label{smolyak2}
\left| f(x) - \sum_{k=0}^{\min\{d-1,m\}} (-1)^k {d -1 \choose k} \sum_{R \in
\mathcal{D}_d :  x \in R \wedge  |R|=2^{k-m}} f(c_R) \right| \lesssim 2^{-\alpha
m} m^{d-1} .
\end{equation}
Let $R'$ be a fixed dyadic box in $[0,1]^d$ of measure $|R'| = 2^{k-m}$. A
counting argument shows that the number of dyadic boxes of measure $2^{-m}$ that
are contain in $R'$ and contained a fixed point $x \in R'$ is
\begin{equation} \label{count3}
\# \{ R \in \mathcal{D}_d : |R| = 2^{-m} \wedge R \subseteq R' \wedge x \in R \}
= {k + d - 1 \choose d - 1}.
\end{equation}
Motivated by \eqref{count3}, we define the embedding $\Phi' :
\mathcal{M}^\alpha([0,1]^d) \rightarrow \mathbb{R}^P$ entry-wise by
$$
\Phi'_j(f) := \sum_{k=0}^{\min\{d-1,m\}} (-1)^k  \frac{{d-1 \choose k}}{{k + d
- 1 \choose d - 1}} \sum_{R \in \mathcal{D}_d :  R_j  \subseteq R \wedge |R| =
2^{k-m}} f(c_R),
$$
for $j \in \{1,\ldots,P\}$. By construction we have
$$
\langle \Phi'(f) , \Psi'(x) \rangle = 
\sum_{k=0}^{\min\{d-1,m\}} (-1)^k {d -1 \choose k} \sum_{R \in
\mathcal{D}_d :  x \in R \wedge  |R|=2^{k-m}} f(c_R),
$$
and by Lemma \ref{deterministic} we conclude that
\begin{equation} \label{rateb}
\left|\langle \Phi'(f) , \Psi'(x)\rangle  - f(x) \right| \lesssim 2^{-\alpha m}
m^{d-1},
\end{equation}
which completes the first step of the proof. 

Next, we iteratively compress $\Phi'$ and $\Psi'$. Set $\Phi^{(0)}
:= \Phi'$ and $\Psi^{(0)} := \Psi'$; we will define $\Phi^{(i)} :
\mathcal{M}^\alpha([0,1]^d) \rightarrow \mathbb{R}^{P-i}$ and $\Psi^{(i)} :
[0,1]^d \rightarrow \mathbb{R}^{P-i}$ for $i = 0,\ldots,P-p$ such that
$$
\langle \Phi^{(i)}(f), \Psi^{(i)}(x) \rangle = \langle \Phi^{(i+1)}(f),
\Psi^{(i+1)}(x) \rangle.
$$
Moreover, these
embeddings will be defined such that $\Psi^{(P-p)} = \Psi$ is the embedding
defined in Definition \ref{Psi} and $\Phi =
\Phi^{(P-p)}$ is the desired corresponding embedding.

Each of the coordinates of the embeddings $\Psi^{(i)}$ will be associated with a
triple $(k,\vec{r},R)$, where $k \in \{0,\ldots,d-1\}$, $\vec{r} \in
\mathbb{Z}^{k+1}$, and $R \in \mathcal{D}^d \wedge |R| = 2^{m-k}$. Consistent with 
\eqref{PsikrR} we define
\begin{equation} \label{PsikrR2}
\Psi^{(i)}_{k,\vec{r},R}(x) = \left\{ \begin{array}{cc}
\chi_R(x) & \text{if }  k = 0 \\
2^{-k/2} \prod_{j=1}^k (\chi_{R_{r_j}^+}(x) - \chi_{R_{r_j}^-}(x))
& \text{if } k \not = 0.
\end{array} \right.
\end{equation}
Let $\mathcal{T}^{(i)}$ denote the set of triples that index the coordinates of
$\Psi^{(i)}$. Initially, 
$$
\mathcal{T}^{(0)} = \{(0,(1),R) : R \in \mathcal{D}_d \wedge |R| = 2^{-m}
\}.
$$
If $R = I_1 \times \cdots \times I_d$ is a box, then let $\pi_j R = I_j$ denote
the projection of $R$ on the $j$-th coordinate. Fix a coordinate $l \in
\{2,\ldots,d\}$ and a scale $s \in \{1,\ldots,d\}$. Suppose that
$\mathcal{T}^{(i)}$ is given, and suppose that we have performed the following
procedure for coordinates strictly greater than $l$ at all scales, and at scales
greater than $s$ for the coordinate $l$. Choose $(k,\vec{r},R), (k,\vec{r},R')
\in \mathcal{T}^{(i)}$ such that
$$
R = \bar{R}^+_l, \quad R' = \bar{R}_l^-, \quad \text{and} \quad
|\pi_l \bar{R}_l^+| = |\pi_l \bar{R}_l^-| = 2^{-s},
$$
for some dyadic box $\bar{R}$.  Set
$$
\mathcal{T}^{(i+1)} = \left( \mathcal{T}^{(i)} \setminus
\{(k,\vec{r},\bar{R}_l^+),(k,\vec{r},\bar{R}_l^-)\} \right) \cup
\{(k+1,\vec{r}_l,\bar{R})\},
$$
where $\vec{r}_l = (1,l,r_1,\ldots,r_k) \in \mathbb{Z}^{k+2}$. Let the embedding
$\Psi^{(i+1)} : [0,1]^d \rightarrow \mathbb{R}^{P-i-1}$ be defined by
\eqref{PsikrR2}. Observe that
$$
\Psi^{(i+1)}_{k+1,\vec{r}_l,\bar{R}}(x) =
\frac{\Psi^{(i)}_{k,\vec{r},\bar{R}_l^+}(x) -
\Psi^{(i)}_{k,\vec{r},\bar{R}_l^-}(x)}{\sqrt{2}}. 
$$
We define the new coordinate of the embedding $\Phi^{(i+1)}$ associated
with the triple $(k+1,\vec{r}_l,R)$ by
$$
\Phi^{(i+1)}_{k+1,\vec{r}_l,\bar{R}}(f) =
\frac{\Phi^{(i)}_{k,\vec{r},\bar{R}_l^+} (f) -
\Phi^{(i)}_{k,\vec{r},\bar{R}_l^-}(f)}{\sqrt{2}}.
$$
Additionally, we modify the coordinates indexed by $(k,\vec{r},\bar{R}_1^+)$ and
$(k,\vec{r},\bar{R}_1^-)$  by
$$
\Phi^{(i+1)}_{k,\vec{r},\bar{R}_1^\pm}(f) =
\Phi^{(i)}_{k,\vec{r},\bar{R}_1^\pm}(f) + \frac{
\Phi^{(i)}_{k,\vec{r},\bar{R}_l^+}(f) + \Phi^{(i)}_{k,\vec{r},\bar{R}_l^-}}{2}.
$$
Otherwise, we set 
$$
\Phi^{(i+1)}_{k,\vec{r},R} = \Phi^{(i)}_{k,\vec{r},R},
$$
for $(k,\vec{r},R) \in \mathcal{T}^{(i)} \setminus
\{(k,\vec{r},\bar{R}^+_1),(k,\vec{r},\bar{R}_1^-)\}$. 
The basic idea is to replace two distinct values with their difference and
average distributed over several blocks. This transformation reduces the number
of coordinates by $1$ while preserving the inner product between $\Psi^{(i)}$
and $\Phi^{(i)}$, see the illustration in Figure \ref{fig05}.

\begin{figure}[h!]
\includegraphics[width=.9\textwidth]{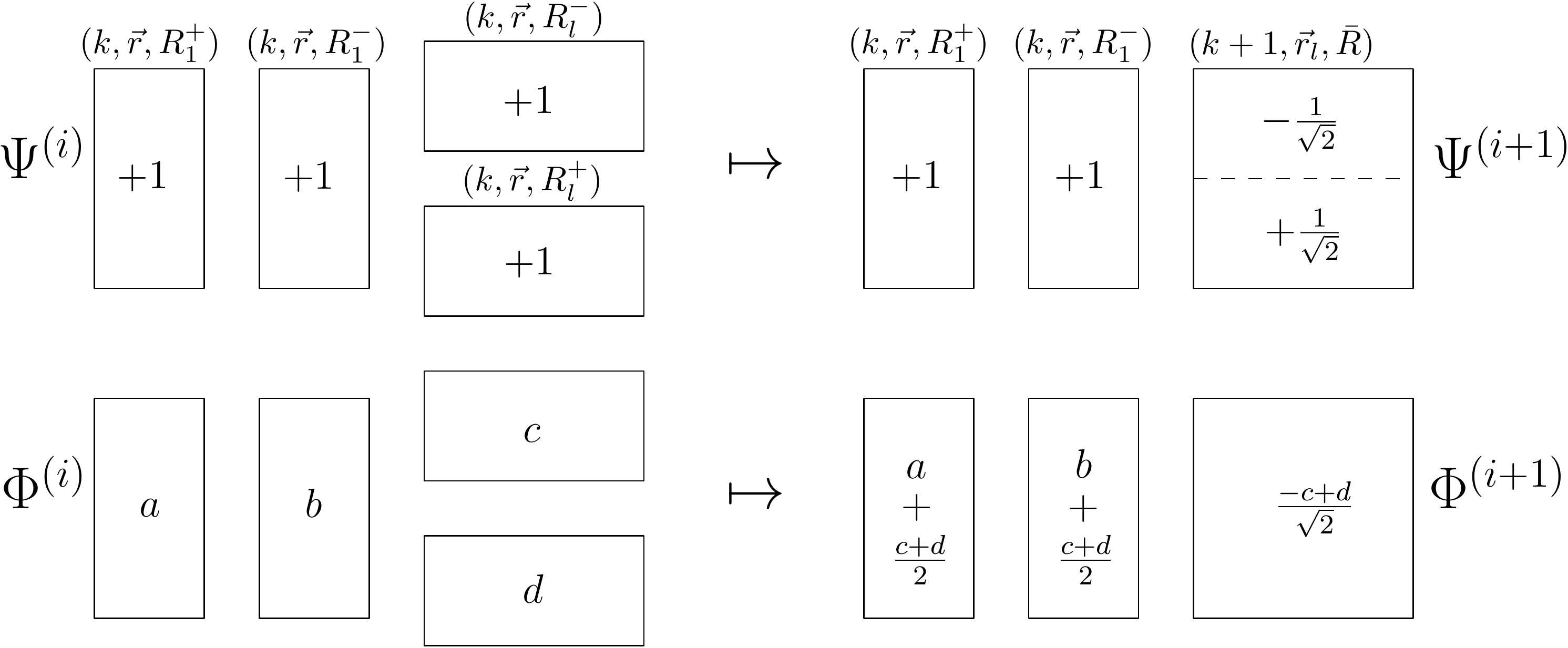}
\caption{The transformation from $\Psi^{(i)}$ and $\Phi^{(i)}$ to $\Psi^{(i+1)}$
and $\Phi^{(i+1)}$ in the simplest case where $\Psi^{(i)}$ is initially
constant on $R^\pm_1$.} \label{fig05}
\end{figure}

The procedure illustrated in Figure \ref{fig05} is similar to the
calculation of the Haar wavelet coefficients of a function.  By the
definition of $\Phi^{(i+1)}$ and $\Psi^{(i+1)}$ it is straightforward to verify
that 
$$
\langle \Phi^{(i)}(f) , \Psi^{(i)}(x) \rangle = \langle \Phi^{(i+1)}(f),
\Psi^{(i+1)}(x) \rangle.
$$
Since set of triples $\mathcal{T}$ was defined in \eqref{triples} to correspond
to the result of applying the above described compression procedure at all
scales and all coordinates $j > 1$, by iterating the described procedure
for $i = 0,\ldots,P - p$ we conclude that
$$
\langle \Phi(f), \Psi(x) \rangle = \langle \Psi'(f) , \Psi'(x) \rangle,
$$
and by \eqref{rateb} the proof is complete. 
\end{proof}

\section{Proof of main result} \label{proofofmain}
\subsection{Summary}
So far we have defined the embedding $\Psi : [0,1]^d \rightarrow \mathbb{R}^p$,
see Definition \ref{Psi}, and established that $\Psi$ has two key
properties: first, the coordinates of $\Psi$ are uncorrelated, see Lemma
\ref{expect0}, and second, mixed H\"older functions can be effectively
approximated by linear functionals acting on the $\Psi$ coordinates of a point,
see Lemma \ref{lem3}.  To complete the proof of Theorem \ref{thm1}, we will use
the embedding $\Psi$ together with the randomized Kaczmarz algorithm of Lemma
\ref{randomkaczmarz}.

\subsection{Proof of Theorem \ref{thm1}}
\begin{proof}[Proof of Theorem \ref{thm1}]
Let $\Psi : [0,1]^d \rightarrow \mathbb{R}^p$ be the embedding defined in
Definition \ref{Psi}, and let $x_1,\ldots,x_{2^{d m}}$ be a sequence of points
that contains exactly one point in each dyadic box that is contained in
$[0,1]^d$ and has dimensions $2^{-m} \times \cdots \times 2^{-m}$. Let $A$ be
the $2^{d m} \times p$ matrix whose $j$-th row is $\Psi(x_j)$. By definition,
the vector $\Psi(x)$ has magnitude
$$
\| \Psi(x) \|_2 = \sqrt{ \sum_{k=0}^{d-1} 2^{-k} {m \choose k} {d - 1 \choose k}},
$$
for all $x \in [0,1]^d$. Thus, all of the rows of $A$ have equal
magnitude. Furthermore, since Lemma \ref{expect0} implies that all of the
singular values of $A$ are equal to $2^{m (d-1)/2}$, it follows that the
condition number $\kappa^2$ of $A$ is equal to
$$
\kappa^2 = \sum_{j=1}^p \sigma_j^2 /\sigma_p^2 =  1/p.
$$

Next, we construct a consistent linear system of equations using the matrix $A$.
By Lemma \ref{lem3}, there exists a vector $w \in \mathbb{R}^p$ that
depends on $f$ such that
$$
\left| \langle w , \Psi(x) \rangle  - f(x) \right| \lesssim 2^{-\alpha m} m^{d-1}.
$$
We define the function $\bar{f} : [0,1]^d \rightarrow \mathbb{R}$ by
$$
\bar{f}(x) := \langle w, \Psi(x) \rangle.
$$
Let $b$ be the $2^{d m}$-dimensional vector whose $j$-th entry is $\bar{f}(x)$.
Consider the consistent linear system of equations
$$
A w = b.
$$
Recall that the randomized Kaczmarz algorithm described in Lemma
\ref{randomkaczmarz} involves sampling rows from $A$ uniformly at random. By
construction, sampling points uniformly at random from $[0,1]^d$ and applying
$\Psi$ is equivalent to sampling rows uniformly at random from $A$. Therefore,
by Lemma \ref{randomkaczmarz} we conclude that if $X_1,X_2,\ldots$ are chosen
independently and uniformly at random from $[0,1]^d$, $w_0 \in \mathbb{R}^p$ is
an initial vector, and \begin{equation} \label{itr} w_n := w_{n-1} +
\frac{\bar{f}(x) -\langle
\Psi(X_n), w_{n-1} \rangle
}{\|\Psi(X_n)\|_2^2}
\Psi(X_n),
\end{equation}
then
\begin{equation} \label{estw}
\mathbb{E} \|w_n - w\|^2_2 \le \left(1 - \textstyle{\frac{1}{p}}
\right)^n \|w_0 - w\|_2^2.
\end{equation}
Next, we quantify the error produced by replacing
$\bar{f}$ with ${f}$ in \eqref{itr}. For an initial vector $w_0^* \in
\mathbb{R}^p$, set
\begin{equation} \label{wstar}
w_n^* = w_{n-1}^* + \frac{f(X_n) - \langle \Psi(X_n), w_{n-1}^*
\rangle}{\|\Psi(X_n)\|_2^2} \Psi(X_n).
\end{equation}
Let $\epsilon_n := f(X_n) - \bar{f}(X_n)$, and define
$$
w_n := w_{n-1} + \frac{\bar{f}(X_n) - \langle \Psi(X_n), w_{n-1}
\rangle}{\|\Psi(X_n)\|_2^2} \Psi(X_n),
$$
and
$$
e_n := e_{n-1} + \frac{\epsilon_n - \langle \Psi(X_n), e_{n-1}
\rangle}{\|\Psi(X_n)\|^2_2} \Psi(X_n).
$$
If $e_0$ is the all zero vector, and $w_0 = w_0^*$, then it follows by induction
that 
$$
w_n^* = w_n + e_n,
$$
for all $n$.  Thus, by the triangle inequality,
$$
\|w_n^* - w\|_2 \le \|w_n -w \|_2 + \|e_n\|_2.
$$
First, we estimate $\|e_n\|_2$.  By orthogonality we have
\begin{equation} \label{ortho}
\|e_n\|^2_2 = \left\| e_{n - 1} - \frac{\langle \Psi(X_n), e_{n-1}
\rangle}{\|\Psi(X_n)\|_2^2} \Psi(X_n) \right\|^2_2
+ \left\| \frac{\epsilon_n }{\|\Psi(X_n)\|^2} \Psi(X_n) \right\|^2_2.
\end{equation}
The first term on the right hand side of \eqref{ortho} is the projection of
$e_{n-1}$ on the subspace orthogonal to $\Psi(X_n)$ so we conclude
$$
\|e_n\|^2_2 \le \|e_{n-1}\|^2_2 + \frac{\epsilon_n^2}{\|\Psi(X_n)\|^2}_2.
$$
By induction, it follows that
$$
\|e_n\|^2_2 \le \sum_{j=1}^n \frac{\epsilon_j^2}{\|\Psi(X_j)\|^2} \lesssim
n 2^{-2 \alpha m} m^{d-1},
$$
where the final step uses the estimates $|\epsilon_n| \lesssim 2^{-\alpha m}
m^{d-1}$ and $\|\Psi(X_n)\|^2_2 \sim m^{d-1}$. Next, we develop a high
probability bound on $\|w_n - w\|_2$. By \eqref{estw} we have
$$
\mathbb{E} \|w_n - w\|^2_2 \le \left(1 - \textstyle{\frac{1}{p}}
\right)^n \|w_0 - w\|_2^2.
$$
By the possibility of considering $f(x) - f(X_1)$ instead of $f(x)$, we may
assume that $|f| \lesssim 1$ on $[0,1]^d$, where the implicit constant only
depends on the mixed H\"older constant $c > 0$, and the dimension $d \ge 1$. It
follows that $\|w\|_2^2 \lesssim p$. Thus, if $n \ge c_1 p \log(2^m)$, then
$$
\mathbb{E} \|w_n - w\|^2_2 \lesssim p 2^{-c_1 m}.
$$
By Markov's inequality
$$
\mathbb{P} \left( \|w_n - w\|_2^2 \ge p 2^{-2 \alpha m} m^{d-1} \right) \le
\frac{\mathbb{E} \|w_n - w \|_2^2}{p 2^{-2 \alpha m } m^{d-1}} \lesssim
\frac{2^{-(c_1 - 2 \alpha)m}}{m^{d-1}}. 
$$
Therefore, when $m$ is large enough in terms of the mixed H\"older constant $c >
0$ and the dimension $d \ge 1$, we have
$$
\mathbb{P}\left( \|w_n - w\|_2 \ge \sqrt{p} 2^{-\alpha m} m^\frac{d-1}{2}
\right) < 2^{-(c_1 -2 \alpha)m}.
$$
Thus, when $n = \lceil c_1 p \log(2^m) \rceil$, we have
$$
\|w_n^* - w\|_2 \lesssim \sqrt{p \log(2^m)} 2^{-\alpha m} m^{\frac{d-1}{2}}
\lesssim 2^{(1/2 -\alpha)m} m^{d - 1/2},
$$
with probability at least $1 - 2^{-(c_1 - 2 \alpha)m}$, where the implicit
constant depends on the mixed H\"older constant $c > 0$, the dimension $d \ge
1$, and the constant $c_1 > 0$. Recall that by Lemma \ref{expect0} all of the
singular values of the matrix $A$ are $2^{m(d-1)/2}$. It follows that
$$
\|A w_n^* - A w\|_2 \lesssim 2^{(d/2 - \alpha)m} m^{d-1/2}.
$$
We define the function $\tilde{f} : [0,1]^2 \rightarrow \mathbb{R}$ by
$$
\tilde{f}(x) : = \langle w_n^*, \Psi(x) \rangle.
$$
Recall that $\bar{f}(x) := \langle \Psi(x), w \rangle$. Thus, we have
$$
\|\tilde{f} - \bar{f}\|_{L^2} = \sqrt{ \int_{[0,1]^d} (\tilde{f}(x) -
\bar{f}(x))^2 dx} =  2^{-d m/2} \| A w_n^* - A w\|_2 \lesssim 2^{-\alpha m} m^{d -
1/2}.
$$
Since $\|f - \bar{f} \|_{L^2} \lesssim 2^{-\alpha m} m^{d-1}$, it follows from
the triangle inequality that
$$
\| f - \tilde{f} \|_{L^2} \lesssim 2^{-\alpha m} m^{d-1/2},
$$
with probability at least $1 - 2^{-(c_1 - 2\alpha)m}$, where the implicit
constant only depends on the constants $c > 0$ and $c_1 > 0$, and the dimension
$d \ge 1$. Setting $\varepsilon = 2^{-m}$ completes the proof.
\end{proof}

\subsection{Proof of computational cost} \label{compcostproof} 
\begin{proof}[Proof of Remark \ref{compcost}]
Fix $\varepsilon = 2^{-m}$. We claim that $\Psi(x)$ can be represented by a 
sparse vector with 
$$
\sum_{k=0}^{d-1} {m \choose k} {d - 1 \choose k} \sim (\log
\textstyle{\frac{1}{\varepsilon}})^{d-1}
$$
nonzero entries. Indeed, recall that each of the coordinates of $\Psi(x)$ is
associated with a triple $(k,\vec{r},R)$, where $k$ is an integer in
$\{0,\ldots,d-1\}$, $\vec{r}$ is an integer vector in $\mathcal{R}_k$, and $R$
is a dyadic box in $\mathcal{D}_{k,r}$. By Definition \ref{Psi}, we have
$$
\Psi_{k,\vec{r},R}(x) \not =0  \quad
\iff \quad x \in R.
$$ 
For each $k \in
\{0,\ldots,d-1\}$ there are ${d -1 \choose k}$ distinct vectors in
$\mathcal{R}_k$. The set $\mathcal{D}_{k,\vec{r}}$ contains $2^{m -k } { m
\choose k}$ dyadic rectangles of measure $2^{k-m}$, which can be divided into ${
m \choose k}$ sets such that the dyadic boxes in each set are disjoint, have the
same dimension, and cover $[0,1]^d$. Thus, each of these sets has one dyadic box
that contains $x$, which can be determined in $\sim 1$ operations. It follows
that we can construct a sparse vector that represents $\Psi(x)$ in $\sim (\log
\frac{1}{\varepsilon} )^{d-1}$ operations. A detailed description of the
construction of the sparse embedding vector $\Psi(x)$ for a point $x \in
[0,1]^d$ for the case $d = 3$ is described in Algorithm \ref{algo3}.

Thus, given a set of $n$ points $X_1,\ldots,X_n$, we can compute the
corresponding sparse embedding vectors $\Psi(X_1),\ldots,\Psi(X_n)$ in $\sim n (
\log \frac{1}{\varepsilon})^{d-1}$ operations. Given these sparse embedding
vectors, computing $w_n^*$ requires $n$ sparse vector inner
products and $n$ sparse vector additions for a total of $\sim n (\log
\frac{1}{\varepsilon})^{d-1}$ operations, see \eqref{wstar}. Thus, when $n \sim
\frac{1}{\varepsilon} ( \log \frac{1}{\varepsilon} )^d$ the total computational
cost of constructing $w_n^*$ is $\sim \frac{1}{\varepsilon} (\log
\frac{1}{\varepsilon})^{2d -1}$ operations.

After computing $w_n^*$, we can construct $\Psi(x)$ and then evaluate
$\tilde{f}(x) = \langle w_n^*, \Psi(x) \rangle$ for any $x \in [0,1]^d$ in $\sim
(\log \frac{1}{\varepsilon} )^{d-1}$ operations. Furthermore, given $w_n^*$ we
can compute the integral of $\tilde{f}$ in $\sim \frac{1}{\varepsilon}$
operations. Indeed, if $x_1,\ldots,x_{2^{dm}}$ is a sequence of points that
contains exactly one point in each dyadic box with equal side lengths and
measure $2^{-dm}$, then
$$
\int_{[0,1]^d} \tilde{f}(x) dx = \varepsilon^3 \sum_{j=1}^{2^{dm}} \langle w_n^*, \Psi(x_j) \rangle =
\left\langle w_n^*, \varepsilon^3 \sum_{j=1}^{2^{dm}} \Psi(x_j) \right\rangle.
$$
We claim that $\sum_{j=1}^{2^{d m}} \Psi(x_j)$ is the vector whose first
$\frac{1}{\varepsilon}$ entries have value $\frac{1}{\varepsilon^2}$, and whose
remaining entries are equal to zero. Indeed, by Definition \ref{Psi} the first
$\frac{1}{\varepsilon}$ entries of $\Psi$ are equal to $1$, and the remaining
coordinates of $\Psi$ each have a fixed magnitude and are positive and negative
an equal number of times. It follows that we can compute the integral of
$\tilde{f}$ in $\sim \frac{1}{\varepsilon}$ operations.
\end{proof}

\section{Algorithm details and numerical example} \label{algoandex}

\subsection{Embedding in three dimensions} \label{algo3}
In this section, we give a detailed description of the algorithm for
approximating mixed H\"older functions in dimension $d=3$.
In particular, we describe the construction of the embedding $\Psi$ in detail. 
For a fixed scale $m$, the embedding $\Psi$ associates each point in $[0,1]^3$
with a $p$-dimensional vector, where
$$
p =
\sum_{k = 0}^{2} {2 \choose k}   {m \choose k} 2^{m-k}= 2^m + 2m 2^{m-1} +
\frac{m(m-1)}{2} 2^{m-2},
$$
see Definition \ref{Psi}. In the general $d$-dimensional case, each coordinate
of $\Psi$ corresponds to a triple $(k,\vec{r},R)$ in the set 
$$
\mathcal{T} = \left\{ (k,\vec{r},R) : k \in \{0,\ldots,d-1\}, \vec{r} \in
\mathcal{R}_k , R \in \mathcal{D}_{k,\vec{R}} \right\},
$$
where
$$
\mathcal{R}_k = \left\{ \vec{r} = (r_0,\ldots,r_k) \in \mathbb{Z}^{k+1} : 1 =
r_0 < r_1 < \cdots < r_k \le d \right\},
$$
and
$$
\mathcal{D}_{k,\vec{R}} = \left\{ R \in \mathcal{D}_d : |R|^{k-m} \wedge I_j =
[0,1] \text{ if } j \not \in \{r_0,\ldots,r_k\}  \right\}.
$$
When $d=3$, we have $k \in \{0,1,2\}$, and the sets $\mathcal{R}_k$ for $k \in
\{0,1,2\}$ are
$$
\mathcal{R}_0 = \{(1)\}, \quad \mathcal{R}_1 = \{(1,2),(1,3)\}, \quad \text{and}
\quad \mathcal{R}_2 = \{(1,2,3)\}.
$$
Let $I_j^k := [(j-1) 2^{-k} , j 2^{-k})$. The sets $\mathcal{D}_{k,\vec{r}}$ for
$k \in \{0,1,2\}$ and $\vec{r} \in \mathcal{R}_k$ are
$$
\mathcal{D}_{0,(1)} = \left\{ I_j^m \times I_1^0 \times I_1^0 :
j = 1,\ldots,2^m \right\},
$$
\begin{multline*}
\mathcal{D}_{1,(1,2)} = \left\{ I_{j_1}^{k_1} \times I_{j_2}^{k_2} \times I_1^0
: k_1 = (m-1)-k_2, k_2 = 0,\ldots,m-1 ,  \right. \\ \left.  j_1 =
1,\ldots,2^{k_1}, j_2 = 1,\ldots,2^{k_2} \right\} ,
\end{multline*}
\begin{multline*}
\mathcal{D}_{1,(1,3)} = \left\{ I_{j_1}^{k_1} \times I_1^0 \times  I_{j_3}^{k_3}
: k_1 = (m-1) - k_3, k_3 = 0,\ldots,m-1 ,  \right. \\ \left. j_1 = 
1,\ldots,2^{k_1} , j_3 = 1,\ldots,2^{k_3} \right\} ,
\end{multline*}
and 
\begin{multline*}
\mathcal{D}_{2,(1,2,3)} = \left\{ I_{j_1}^{k_1} \times I_{j_2}^{k_2} \times
I_{j_3}^{k_3}:  k_1 = (m - 2) -k_2 - k_3, k_2 \in \{0,\ldots,m-2\},   \right. \\
\left.  k_3 \in \{0,\ldots,(m-2) -k_1\}, j_1 = 1,\ldots,2^{k_1},j_2 =
1,\ldots,2^{k_2}, j_3 = 1,\ldots,2^{k_3} \right\}.
\end{multline*}
Observe that $\mathcal{D}_{0,(1)}$ has $2^m$ elements, $\mathcal{D}_{1,(1,2)}$
and $\mathcal{D}_{1,(1,3)}$ each have $m 2^m$ elements, and
$\mathcal{D}_{2,(1,2,3)}$ has $(m(m-1)/2)2^{m-2}$, which accounts for all
$$
p = 2^m + 2m 2^{m-2} + \frac{m(m-1)}{2} 2^{m-2}
$$
elements of $\mathcal{T}$.  Using the explicit form of the sets
$\mathcal{D}_{k,\vec{r}}$ described above and Definition \ref{Psi} it is
straightforward to construct $\Psi$, see
Algorithm \ref{algo}.

\begin{algorithm}[h!]
\textbf{input:} vector $x = (x_1,x_2,x_3) \in [0,1]^3$, 
integer $m \ge 2$  \hfill\,\\
\textbf{output:} 
sparse vector $\Psi(x)  = (\Psi_1(x),\ldots,\Psi_p(x)) \in \mathbb{R}^p$ \hfill
\, 
\begin{enumerate}[1:]
\item \textit{initialization}
\item $\Psi_1(x) = 0,\ldots,\Psi_p(x) = 0$
\item
\item \textit{entries corresponding to $\mathcal{D}_{0,(1)}$}
\item $i = \lfloor 2^m x_1 \rfloor + 1$
\item $\Psi_i(x) = 1$
\item
\item \textit{entries corresponding to $\mathcal{D}_{(1,(1,2)}$}
\item \textbf{for} $k_2 = 0,\ldots,m-1$
\item \quad $k_1 = m-1-k_1$
\item \quad $j_1 = \lfloor 2^{k_1} x_1 \rfloor$
\item \quad $j_2 = \lfloor 2^{k_2} x_2 \rfloor$
\item \quad $s_2 = 2(\lfloor 2^{k_2+1} x_2 \rfloor \mod 2)-1$
\item \quad $i = 2^m + k_2 2^{m-1} + j_1 2^{k_2} + j_2+1$
\item \quad $\Psi_{i}(x) = s_2/\sqrt{2}$
\item
\item \textit{entries corresponding to $\mathcal{D}_{(1,(1,3)}$}
\item \textbf{for} $k_3 = 0,\ldots,m-1$
\item \quad $k_1 = m-1-k_3$
\item \quad $j_1 = \lfloor 2^{k_1} x_1 \rfloor$
\item \quad $j_3 = \lfloor 2^{k_3} x_3 \rfloor$
\item \quad $s_3 = 2(\lfloor 2^{k_3+1} x_3 \rfloor \mod 2)-1$
\item \quad $i = 2^m + m 2^{m-1} + k_3 2^{m-1} + j_1 2^{k_3} + j_3+1$
\item \quad $\Psi_{i}(x) = s_3/\sqrt{2}$
\item
\item \textit{entries corresponding to $\mathcal{D}_{2,(1,2,3)}$}.
\item \textbf{for} $k_2 = 0,\ldots,m-2$
\item \quad \textbf{for} $k_3 = 0,\ldots,m-2 - k_1$
\item \quad \quad $k_1 = m-2-k_2 - k_3$
\item \quad \quad $j_1 = \lfloor 2^{k_1} x_1 \rfloor$
\item \quad \quad $j_2 = \lfloor 2^{k_2} x_2 \rfloor$
\item \quad \quad $j_3 = \lfloor 2^{k_3} x_3 \rfloor$
\item \quad \quad $s_2 = 2(\lfloor 2^{k_2+1} x_2 \rfloor \mod 2) - 1$
\item \quad \quad $s_3 = 2(\lfloor 2^{k_3+1} x_3 \rfloor \mod 2) - 1$
\item \quad \quad $i = (m+1)2^m  +   k_2 (2 m -1 - k_2) 2^{m-3} + k_3
2^{m-2} + j_1 2^{k_2 + k_3} + j_2 2^{k_3} + j_3 + 1$
\item \quad \quad $\Psi_{i}(x) = s_1 s_2 / 2 $
\item
\item \textbf{return} $\Psi(x)$
\end{enumerate}
\caption{Constructing the embedding $\Psi : [0,1]^3 \rightarrow \mathbb{R}^p$}
\label{algo}
\end{algorithm}

\subsection{Applying randomized Kaczmarz} \label{algorand}
Suppose that $f : [0,1]^3 \rightarrow \mathbb{R}^3$ is a $(c,\alpha)$-mixed
H\"older function that is sampled at $n$ points $X_1,\ldots,X_n$
chosen independently and uniformly at random from $[0,1]^3$. Fix $\varepsilon =
2^{-m}$ for a positive scale $m$.  Using Algorithm \ref{algo} we can construct
the sequence of sparse embedding vectors $\Psi(X_1),\ldots,\Psi(X_n)$
corresponding to these points in $\sim n (\log \frac{1}{\varepsilon})^2$
operations. Next, we use the randomized Kaczmarz procedure described in Lemma
\ref{randomkaczmarz}. Given an intial guess $w_0^* \in \mathbb{R}^p$ we define
$$
w_n^* := w_{n-1}^* +  \frac{f(X_n) -\langle \Psi(X_n), w_{n-1}^* \rangle
}{\|\Psi(X_n)\|_2^2} \Psi(X_n),
$$
for $n = 1,2,\ldots$. If $n = \lceil c_1 \frac{1}{\varepsilon} (\log
\frac{1}{\varepsilon})^3 \rceil$, and 
$$
\tilde{f}(x) := \langle w_n^*, \Psi(x) \rangle,
$$
then it follows from Theorem \ref{thm1}, that
\begin{equation} \label{accb}
\|f - \tilde{f}\|_{L^2} \lesssim \varepsilon^\alpha \left(\log
\textstyle{\frac{1}{\varepsilon}} \right)^{5/2},
\end{equation}
with probability at least $1 - \varepsilon^{c_1 - 2 \alpha}$.

\subsection{Numerical Example} \label{numericalexample}
In this section, we present a numerical example of approximating a mixed
H\"older function on $[0,1]^3$ using the method detailed in \S \ref{algo3} and
\S \ref{algorand}. We start by using fractional Brownian motion to construct an
example of a function $f : [0,1]^3 \rightarrow \mathbb{R}$ that is
$\alpha$-mixed H\"older for $\alpha = .79$.  For a given parameter $0 < h < 1$,
fractional Brownian motion $B_h(t)$ is a continuous time stochastic process
that starts at zero, has expectation zero for all $t$, and has covariance
function
$$
\mathbb{E} \left( B_h(t) B_h(s) \right) = \frac{1}{2} \left( t^{2 h} + s^{2 h} -
(t-s)^{2h}\right) \quad \text{for} \quad t > s.
$$
When $h=1/2$, the covariance function $\mathbb{E} ( B_{h}(t) B_{h}(s) ) = s$,
and it follows that $B_h(t)$ is standard Brownian motion. Just as standard
Brownian motion is almost surely $\alpha$-H\"older continuous for any $\alpha <
1/2$, fractional Brownian motion parameterized by $0 < h < 1$ is almost surely
$\alpha$-H\"older continuous for any $\alpha < h$. To create a numerical
example, we fix $h = .8$ and simulate three fractional Brownian motions
$B_{.8}^{(1)}$, $B_{.8}^{(2)}$, and $B_{.8}^{(3)}$, see Figure
\ref{brownian}.
\begin{figure}[h!]
\begin{tabular}{ccc}
\includegraphics[width=.3\textwidth]{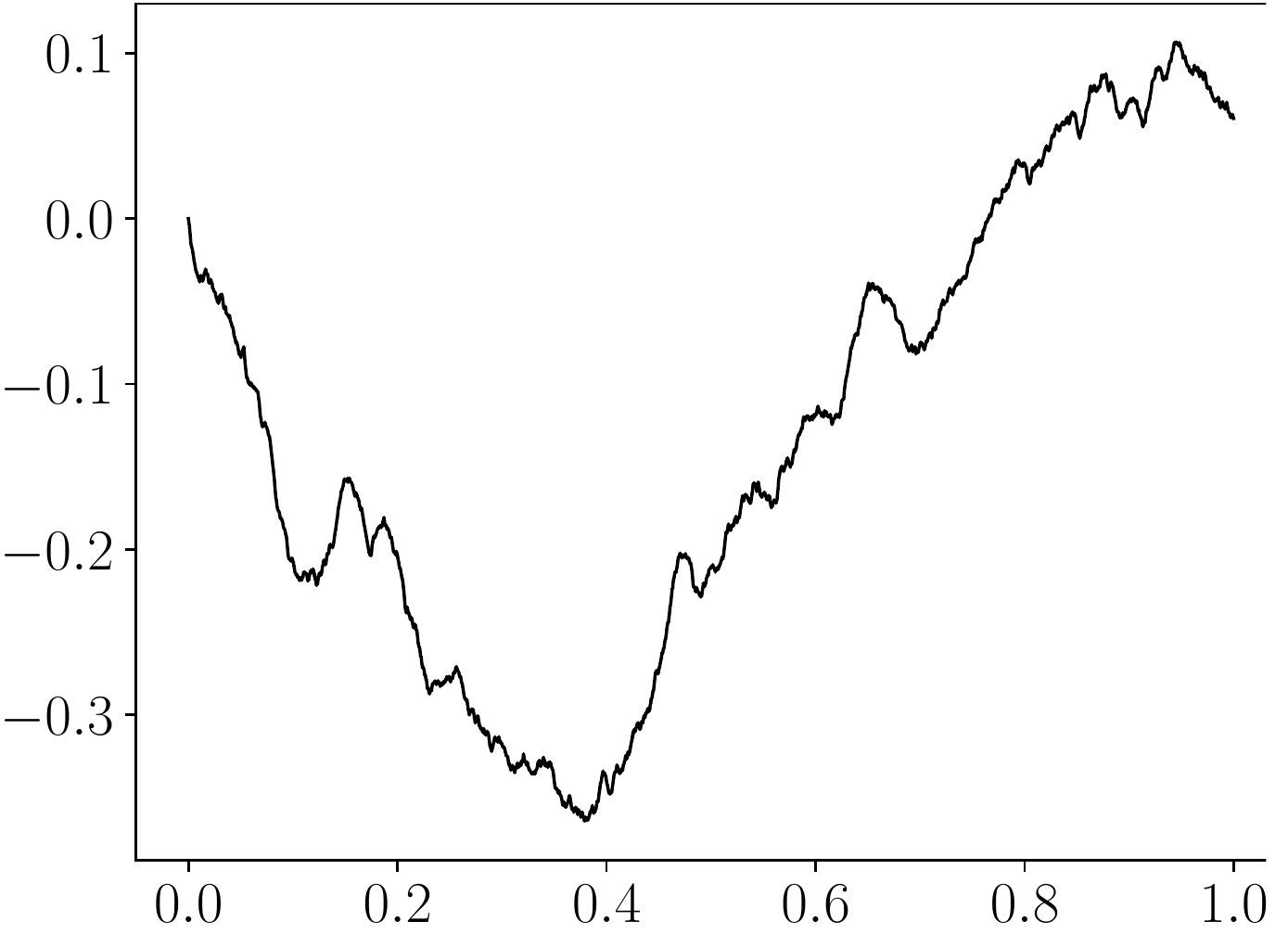} &
\includegraphics[width=.3\textwidth]{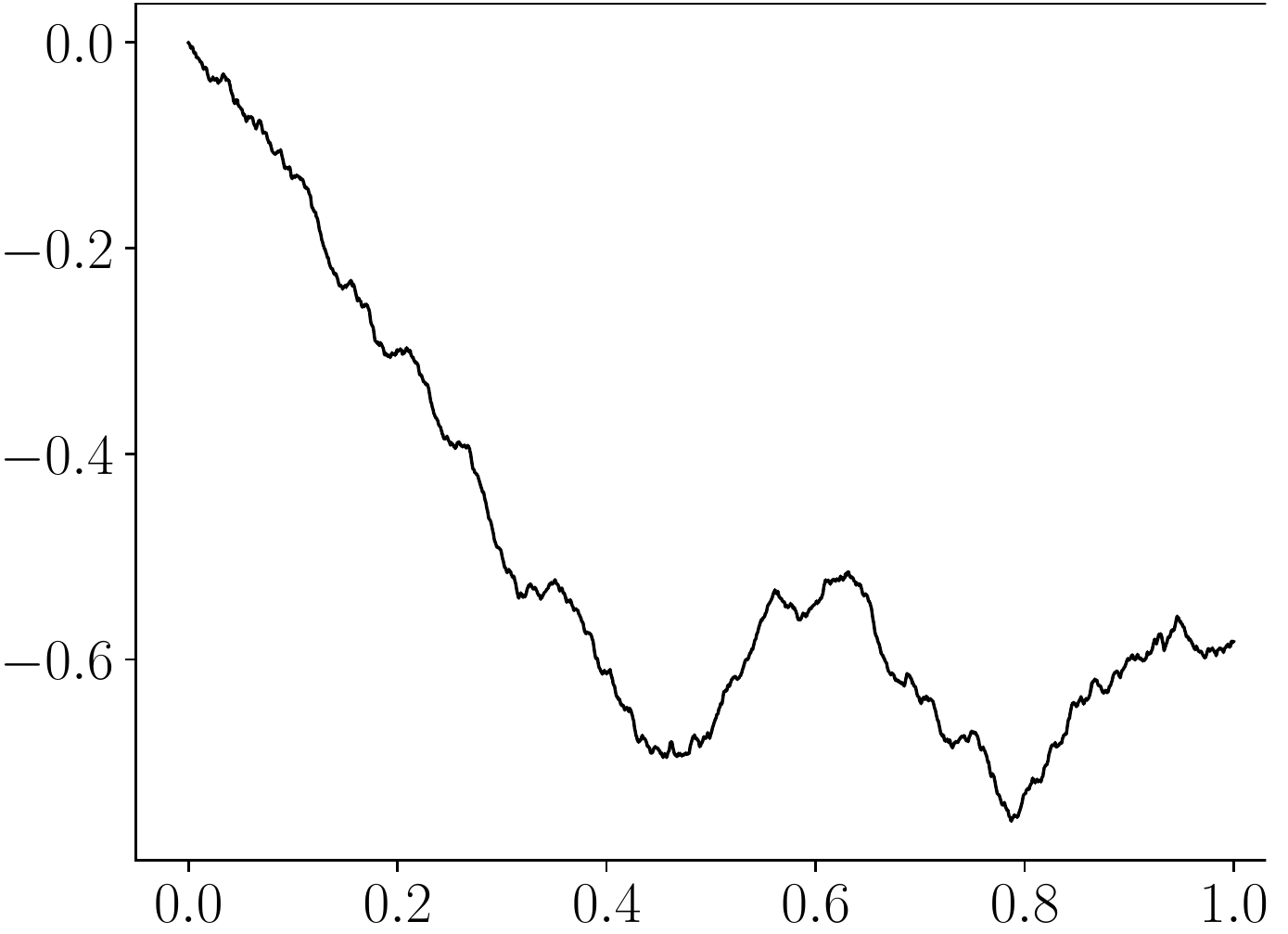} &
\includegraphics[width=.3\textwidth]{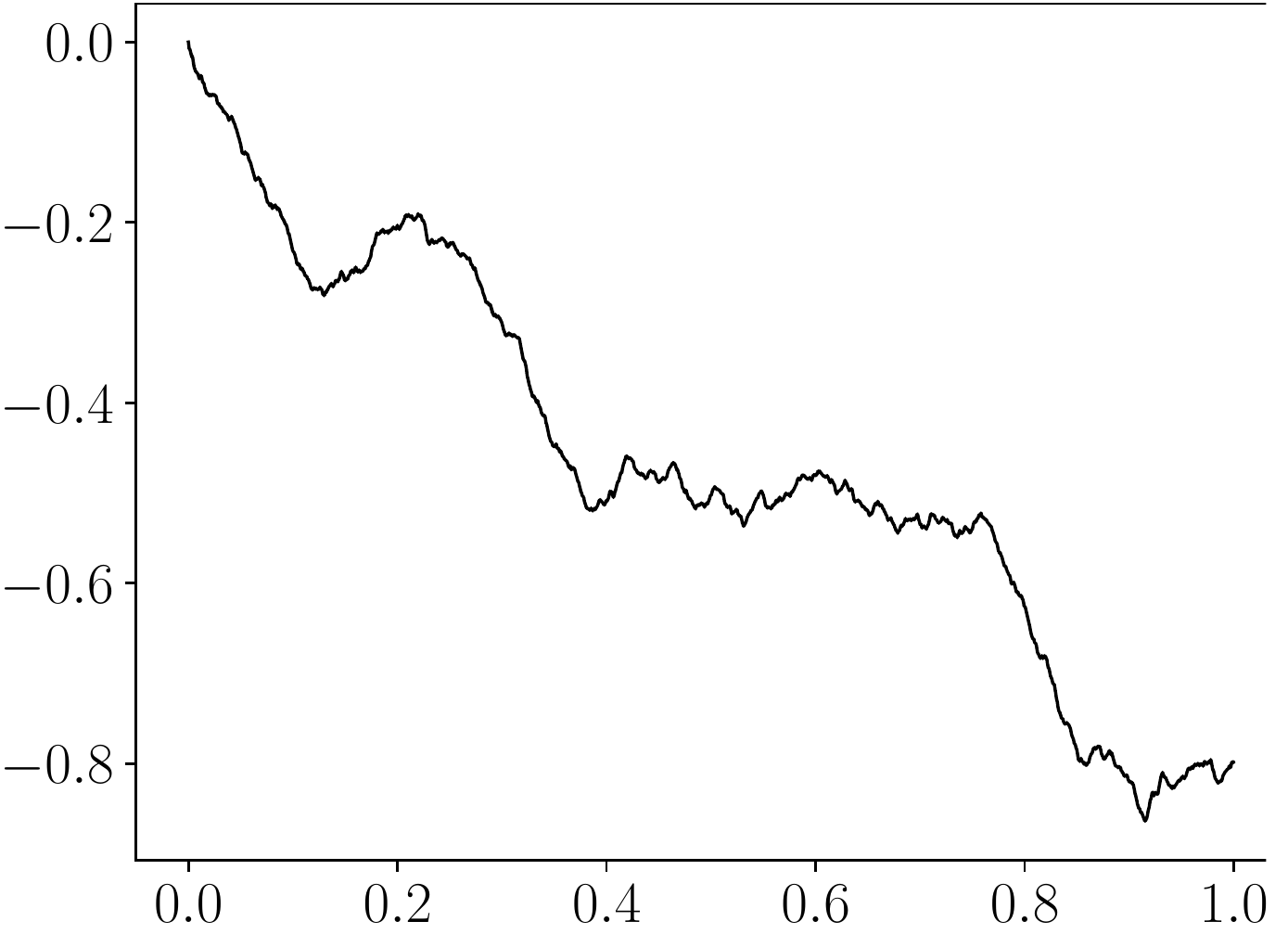} 
\end{tabular}
\caption{Three fractional Brownian motions $B_{.8}^{(1)}$, $B_{.8}^{(2)}$, and
$B_{.8}^{(3)}$.}
\label{brownian}
\end{figure}

Let $f : [0,1]^3 \rightarrow \mathbb{R}$ be defined by
\begin{equation} \label{fracbrownian}
f(x_1,x_2,x_3) = B_{.8}^{(1)}(x_1) B_{.8}^{(2)}(x_2) B_{.8}^{(3)}(x_3),
\end{equation}
for $(x_1,x_2,x_3) \in [0,1]^3$. Recall from the discussion in \S
\ref{motivation} that a tensor product of $\alpha$-H\"older continuous functions
on $[0,1]^d$ is an $\alpha$-mixed H\"older function on $[0,1]^d$. Since the
fractional Brownian motions $B_{.8}^{(1)}(x_1)$, $B_{.8}^{(2)}(x_2)$, and
$B_{.8}^{(3)}(x_3)$ are each almost surely $\alpha$-H\"older continuous for
$\alpha = .79$, it follows that the function $f(x_1,x_2,x_3)$ is almost surely
$\alpha$-mixed H\"older for the same value of $\alpha$.

For the numerical experiments, we set $c_1 = 3.5$ and sample $\sim c_1
\frac{1}{\varepsilon} (\log \frac{1}{\varepsilon})^3$ points uniformly at random
from $[0,1]^3$, where $\varepsilon = 2^{-m}$ for a given scale $m$. By sampling
the function at these random points, and using the algorithm detailed in \S
\ref{algo3} and \S \ref{algorand} we can construct an approximation $\tilde{f}$
that satisfies 
$$
\|f - \tilde{f}\|_{L^2} \lesssim \varepsilon^\alpha (\log
\textstyle{\frac{1}{\varepsilon}} )^{5/2},
$$
with probability at least $1 - \varepsilon^2$, see Theorem \ref{thm1}. For the
numerical experiments we estimate $\|f - \tilde{f}\|_{L^2}$ by using a random
test set. In particular, we sample $N = 10^8$ points $Y_1, \ldots,Y_N$ chosen
independently and uniformly at random from $[0,1]^3$, and defined $F :=
(f(Y_1),\ldots,f(Y_N)) \in \mathbb{R}^N$, $\tilde{F} =
(\tilde{f}(Y_1),\ldots,\tilde{f}(Y_N)) \in
\mathbb{R}^N$, and
$$
\text{err}_2 := \frac{\|F - \tilde{F}\|_2}{\|F\|_2}, 
$$
which can be viewed as a Monte Carlo approximation of the true relative error.
It will also be interesting to consider the maximum relative error
on the test set, which we define by
$$
\text{err}_\infty := \frac{\|F - \tilde{F}\|_\infty}{\|F\|_\infty}.
$$
Finally, by taking advantage of the product structure of $f$, we can compute the
integral $I$ of $f$ exactly. Let $\text{err}_\text{int} = |I -
\tilde{I}|/|I|$, where $\tilde{I}$ denotes the integral of our approximation
$\tilde{f}$, which can be efficiently computed as described in \S
\ref{compcost}.  We run the described numerical experiment for scales $m =
5,\ldots,18$ and report the values of $\text{err}_2$, $\text{err}_\infty$ and
$\text{err}_\text{int}$ in Table \ref{numtable}.

\begin{table}[h!]
$$
\begin{array}{c|c|c|c}
m & \text{err}_2 & \text{err}_\infty & \text{err}_\text{int}  \\
\hline 
5  & 2.1515E-01 & 2.8048E-01 & 1.1245E-02 \\
6  & 1.3670E-01 & 2.1546E-01 & 2.4038E-03 \\
7  & 9.1350E-02 & 1.5278E-01 & 2.7133E-04 \\
8  & 5.7977E-02 & 1.0867E-01 & 1.8701E-03 \\
9  & 3.6125E-02 & 6.8297E-02 & 3.1067E-04 \\
10 & 2.2403E-02 & 4.1824E-02 & 1.2265E-06 \\
11 & 1.3948E-02 & 3.6635E-02 & 2.9395E-05 \\
12 & 8.5529E-03 & 2.1643E-02 & 4.3197E-05 \\
13 & 5.2153E-03 & 1.6592E-02 & 1.2824E-05 \\
14 & 3.1656E-03 & 1.1169E-02 & 4.7572E-06 \\
15 & 1.9084E-03 & 6.7382E-03 & 1.1496E-06 \\
16 & 1.1459E-03 & 4.4197E-03 & 1.8777E-06 \\
17 & 6.8489E-04 & 2.3634E-03 & 3.5733E-07 \\
18 & 4.0723E-04 & 1.5863E-03 & 5.3804E-08
\end{array}
$$
\caption{Approximation errors for scales $m= 5,\ldots,18$.}
\label{numtable}
\end{table}

To provide context for the results reported in Table \ref{numtable} we make two
plots in Figure \ref{vis}. On the left, we plot $\text{err}_2$ and
$\text{err}_\infty$ along with two reference curves: first, we plot $\sim
\varepsilon^\alpha (\log \frac{1}{\varepsilon})^{5/2}$, which is the upper bound
from Theorem \ref{thm1} that holds with probability at least $1 -
\varepsilon^2$. Second, we plot $\sim \varepsilon^\alpha$ which is approximately
the amount that fractional Brownian motion with parameter $h = .8$ varies on an
interval of length $\varepsilon$, which provides a lower bound, see Figure
\ref{vis}. 

\begin{figure}[h!]
\begin{tabular}{cc}
\includegraphics[height=.32\textwidth]{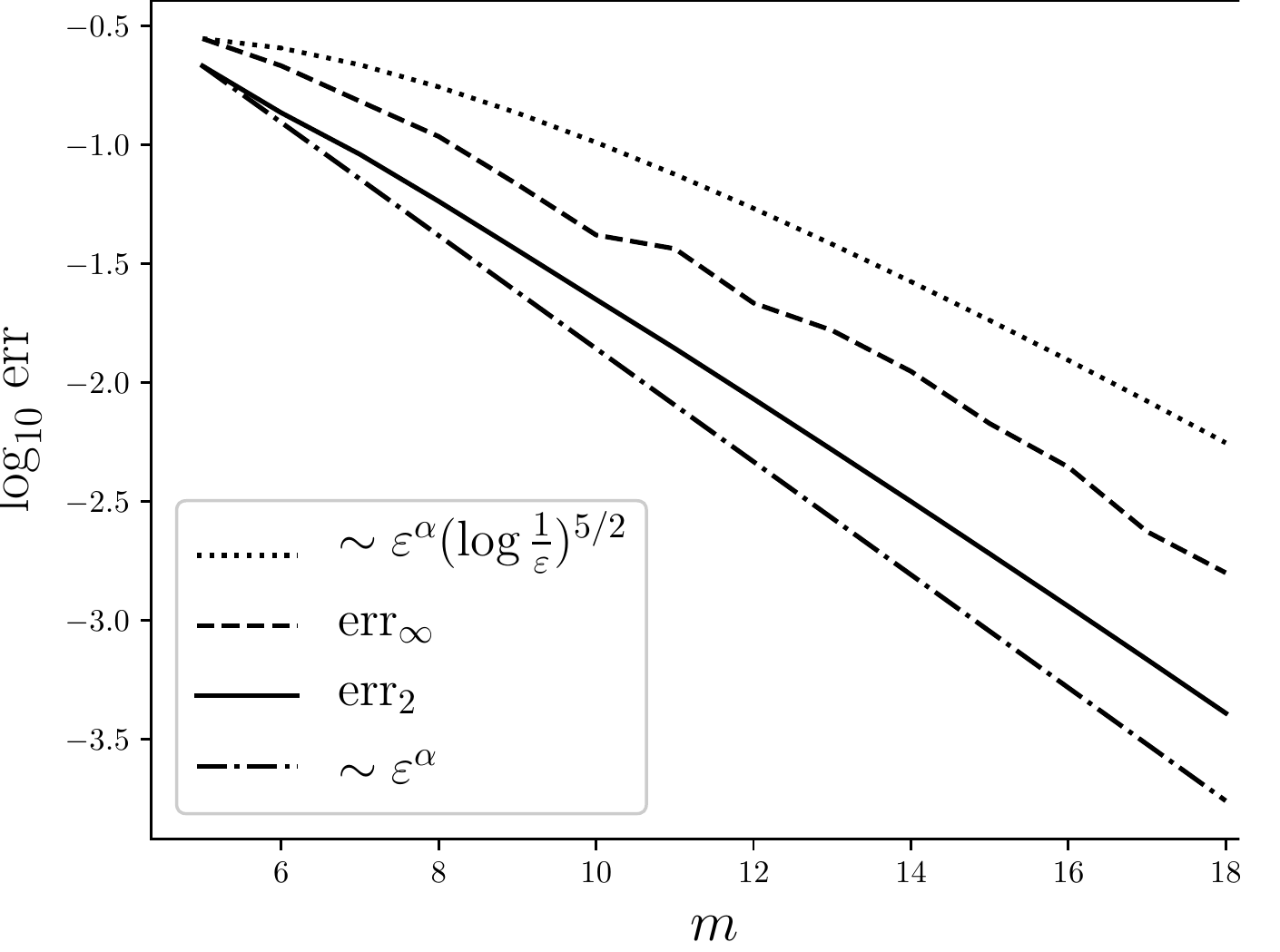} &
\includegraphics[height=.32\textwidth]{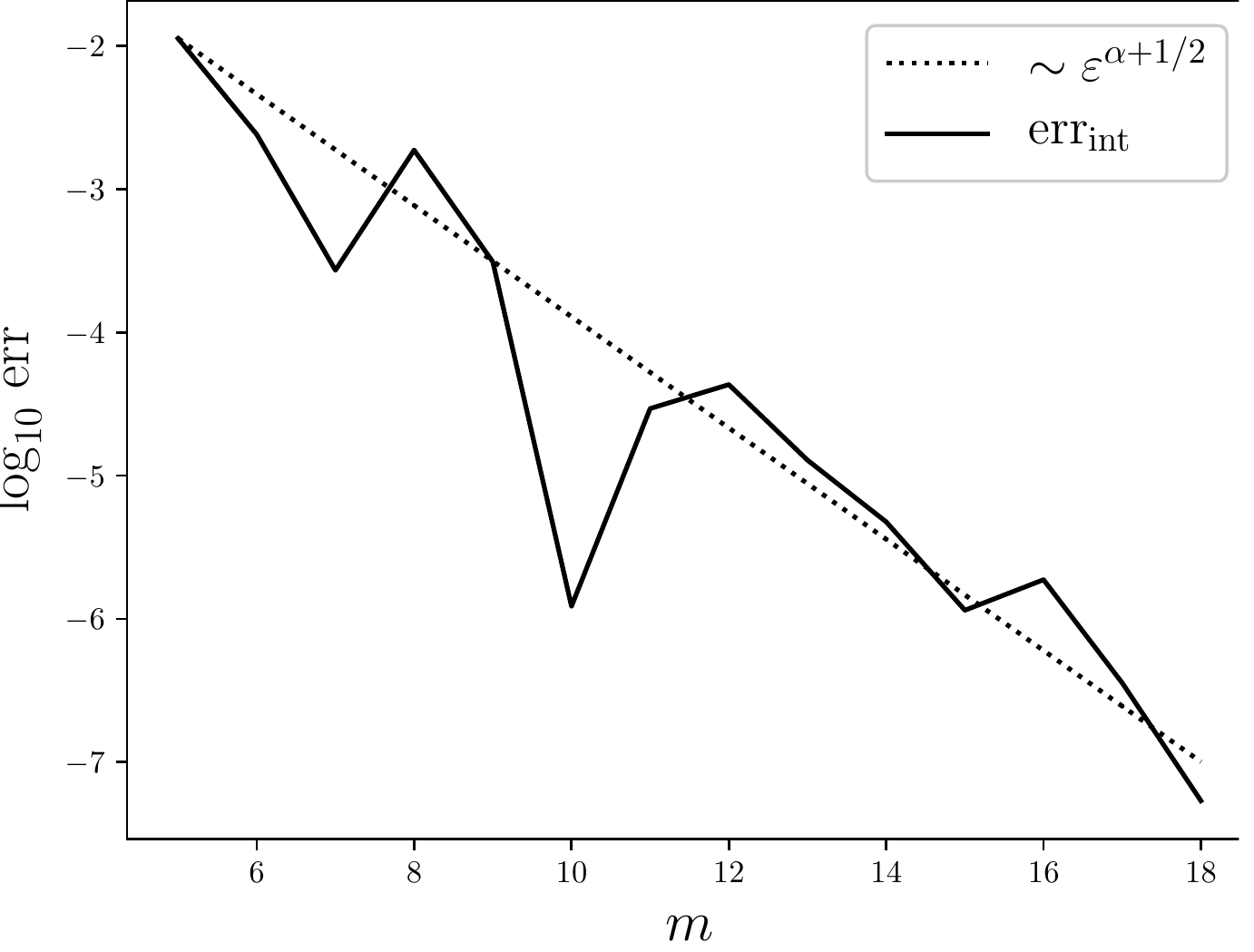}
\end{tabular}
\caption{Visualization of data in Table \ref{numtable} with references lines}
\label{vis}
\end{figure}

The numerical results suggest that the power on $\log
\frac{1}{\varepsilon}$ in Theorem \ref{thm1} can be improved, and suggest that
a similar approximation result may hold in $L^\infty$. 

In the right plot in Figure \ref{vis}, we
plot $\text{err}_\text{int}$ along with the reference curve $\sim
\varepsilon^{\alpha+1/2}$, which we suspect is the expected value of
$\text{err}_\text{int}$ up to factors of $\log \frac{1}{\varepsilon}$.  A
heuristic justification for the additional factor of $\varepsilon^{1/2}$
is that
computing the integral involves summing $\sim \frac{1}{\varepsilon}$ entries of
the vector $w_n^*$, which we might expect to have somewhat independent errors.
However, these errors are not statistically independent due to their interaction
in the iteration of the Kaczmarz algorithm.

\section{Discussion} \label{discussion}

In this paper, we developed a framework for synthesizing mixed H\"older
functions on $[0,1]^d$ from random samples of the function. Our principle analytical tool is
the embedding $\Psi : [0,1]^d \rightarrow \mathbb{R}^p$ that encodes the dyadic
boxes of measure at least $2^{-m}$ that contain a point. This embedding allows us
to consider a random sample of points $X_1,\ldots,X_n \in [0,1]^d$ as a random
matrix $ \left( \Psi(X_1) \cdots \Psi(X_n) \right) \in \mathbb{R}^{p \times n}$.
This encoding together with the randomized Kaczmarz algorithm provides a high
probability approximation method for mixed H\"older functions. This
method can be viewed as constructing a representation of a mixed H\"older
function in a tensor Haar wavelet expansion, which can be viewed as a local
product representation of a function. Product decompositions are common in the
field of data analysis as a way to represent matrices and tensors, for example,
the singular value decomposition represents a matrix as a sum of outer products
of vectors $A = \sigma_1 u_1 v_1^\top + \cdots + \sigma_k u_k v_k^\top.$ Part of
the larger goal of this paper is to understand how a function can be decomposed
and represented as a sum of local product structures. Recall that the class of
mixed H\"older functions is defined by the geometric condition that the mixed
difference with respect to each box is bounded by the measure of the box to
a fixed power $|\delta_R f| \lesssim |R|^\alpha$. This geometric condition is
well-defined for Banach space valued functions defined on a product of metric
spaces. For example, if metrics can be constructed on the rows and columns of a
matrix, or similarly on each of the dimensions of a tensor, then it is possible
to consider the class of mixed H\"older matrices or mixed H\"older tensors with
respect to the given metrics; this approach to matrix and tensor analysis was
initiated by Coifman and Gavish \cite{CoifmanGavish2011, GavishCoifman2012} and
has been developed by several authors including \cite{AnkenmanLeeb2018,
MishneCoifmanLavzinSchiller2018, MischneTalmonCohenCoifmanKluger2018,
MishneTalmonMeirSchillerDubinCoifman2016,
YairTalmonCoifmanKevrekidis2017}. The results of this paper for mixed H\"older
functions in the classical setting may provide insight for the development of
methods to represent or complete matrices or tensors. Moreover, in the classical
setting, it may be possible to generalize the results of this paper to
functions with higher levels of regularity. By considering mixed H\"older
functions rather than functions with higher levels of regularity, we were able
to use  tensor Haar wavelets rather than wavelets with more regularity; this
simplified our analysis and helped to isolate the underlying geometric issues.
If a function has more regularity, say, if the derivative of a function is mixed
H\"older, then it may be possible to develop a similar approach to that
described in this paper using tensor wavelets with more regularity to achieve
better error rates.

\subsection*{Acknowledgements}
The author would like to thank Ronald R. Coifman for many useful discussions.

\end{document}